\UseRawInputEncoding
\documentclass[12pt,reqno]{amsart}
\usepackage{fullpage}

\newtheorem{theorem}{Theorem}[section]

\newtheorem{prop}[theorem]{Proposition}

\newtheorem{conj}[theorem]{Conjecture}

\usepackage{graphicx}
\usepackage{color}
\usepackage[dvipsnames]{xcolor}
\usepackage{subfigure}
\usepackage{amssymb}
\usepackage{amsmath,mathrsfs}
\usepackage{colonequals}
\usepackage{hyperref}
\usepackage{wrapfig}

\theoremstyle{definition}
\newtheorem{definition}[theorem]{Definition}

\newtheorem{example}[theorem]{Example}

\newtheorem{assumption}{Assumption}
\newtheorem{remark}[theorem]{Remark}
\newtheorem{question}[theorem]{Question}
\newtheorem{vmethod}{Voting Method}


\renewcommand{\subset}{\subseteq}

\renewcommand{\epsilon}{\varepsilon}
\renewcommand{\nu}{v}

\newcommand{\abs}[1]{\left|#1\right|}                   
\newcommand{\vnorm}[1]{\left\|#1\right\|}    


\newcommand{\E}{\mathbb{E}}

\renewcommand{\P}{\mathbb{P}}
\newcommand{\R}{\mathbb{R}}


\newcommand{\embolden}[1]{\textbf {#1}}

\renewcommand{\colonequals}{=}

\begin{document}

\title{Designing Stable Elections}

\author{Steven Heilman}
\address{Department of Mathematics, University of Southern California, Los Angeles, CA 90089-2532}
\email{stevenmheilman@gmail.com}
\date{\today}
\thanks{Supported by NSF Grant CCF 1911216 and DMS 1839406.  Any opinions, findings, and conclusions or recommendations expressed in this material are those of the author and do not necessarily reflect the views of the National Science Foundation.}
\subjclass[2010]{60-02, 91B14, 91B12, 60C05}
\keywords{social choice theory, voting theory, noise stability, majority, plurality}

\maketitle

%
%
%

\section{Introduction}\label{secintro}

Suppose votes have been cast in an election between two candidates, and then an adversary can select a fixed number of votes to change.  Which voting method best preserves the outcome of the election?  A majority vote does, among all voting methods where both candidates have an equal chance of winning the election.

Now, suppose votes have been cast in an election between two candidates, and then each vote is randomly changed with a small probability, independently of the other votes.  It is desirable to keep the outcome of the election the same, regardless of the changes to the votes.  It is well known that the US electoral college system is more than 4 times more likely to have a changed outcome due to vote corruption, when compared to a majority vote.  In fact, Mossel, O'Donnell and Oleszkiewicz proved in 2005 that the majority voting method is most stable to this random vote corruption, among voting methods where each person has a small influence on the election.  Below, we survey the design of elections that are resilient to attempted interference by third parties.  We discuss some recent progress on the analogous result for elections between more than two candidates.  In this case, plurality should be most stable to corruption in votes.  We briefly discuss ranked choice voting methods (where a vote is a ranked list of candidates).

\subsection{Condorcet's Paradox}

Applications of mathematics to the analysis of elections perhaps began with Marquis de Condorcet in the 1700s.  Condorcet's famous paradox demonstrates that an election method that uses ranked preferences of voters might not have a sensible winner.  Consider the following ranking of three candidates $a,b$ and $c$ between three voters $1,2$ and $3$.

\begin{table}[htbp!]
\centering
\begin{tabular}{|c||c|c|c|}
\hline
Voter & Rank 1 & Rank 2 & Rank 3\\
\hline
\hline
1 & $a$ & $b$ & $c$\\
\hline
2 & $b$ & $c$ & $a$\\
\hline
3 & $c$ & $a$ & $b$\\
\hline
\end{tabular}
\caption{Three voters (one for each row of the table) provide rankings of three candidates $a,b$ and $c$.  For example, voter $1$ most prefers candidate $a$.}
\label{table1}
\end{table}

If we ignore candidate $b$, then voters $2$ and $3$ prefer $c$ over $a$, while voter $1$ prefers $a$ over $c$.  So, using a majority rule for these preferences, the voters prefer $c$ over $a$.

\begin{table}[htbp!]
\centering
\begin{tabular}{|c||c|c|}
\hline
Voter & Rank 1 & Rank 2 \\
\hline
\hline
1 & $a$ &  $c$\\
\hline
2 & $c$ & $a$\\
\hline
3 & $c$ & $a$ \\
\hline
\end{tabular}
\caption{If candidate $b$ is ignored in Table \ref{table1}, the remaining rankings of candidates $a$ and $c$ indicate that $c$ is the preferred by the majority of voters.}
\label{table2}
\end{table}

If we ignore candidate $c$ in Table \ref{table1}, then voters $1$ and $3$ prefer $a$ over $b$, while voter $2$ prefers $b$ over $a$.  So, using a majority rule again, the voters prefer $a$ over $b$.

Finally, if we ignore candidate $a$ in Table \ref{table1}, then voters $1$ and $2$ prefer $b$ over $c$, while voter $3$ prefers $c$ over $b$.  So, using a majority rule, the voters prefer $b$ over $c$.

%

\begin{figure}[h!]
   \def\svgwidth{.3\textwidth}
\begingroup%
  \makeatletter%
  \providecommand\color[2][]{%
    \errmessage{(Inkscape) Color is used for the text in Inkscape, but the package 'color.sty' is not loaded}%
    \renewcommand\color[2][]{}%
  }%
  \providecommand\transparent[1]{%
    \errmessage{(Inkscape) Transparency is used (non-zero) for the text in Inkscape, but the package 'transparent.sty' is not loaded}%
    \renewcommand\transparent[1]{}%
  }%
  \providecommand\rotatebox[2]{#2}%
  \newcommand*\fsize{\dimexpr\f@size pt\relax}%
  \newcommand*\lineheight[1]{\fontsize{\fsize}{#1\fsize}\selectfont}%
  \ifx\svgwidth\undefined%
    \setlength{\unitlength}{198.42519685bp}%
    \ifx\svgscale\undefined%
      \relax%
    \else%
      \setlength{\unitlength}{\unitlength * \real{\svgscale}}%
    \fi%
  \else%
    \setlength{\unitlength}{\svgwidth}%
  \fi%
  \global\let\svgwidth\undefined%
  \global\let\svgscale\undefined%
  \makeatother%
  \begin{picture}(1,1)%
    \lineheight{1}%
    \setlength\tabcolsep{0pt}%
    \put(0,0){\includegraphics[width=\unitlength,page=1]{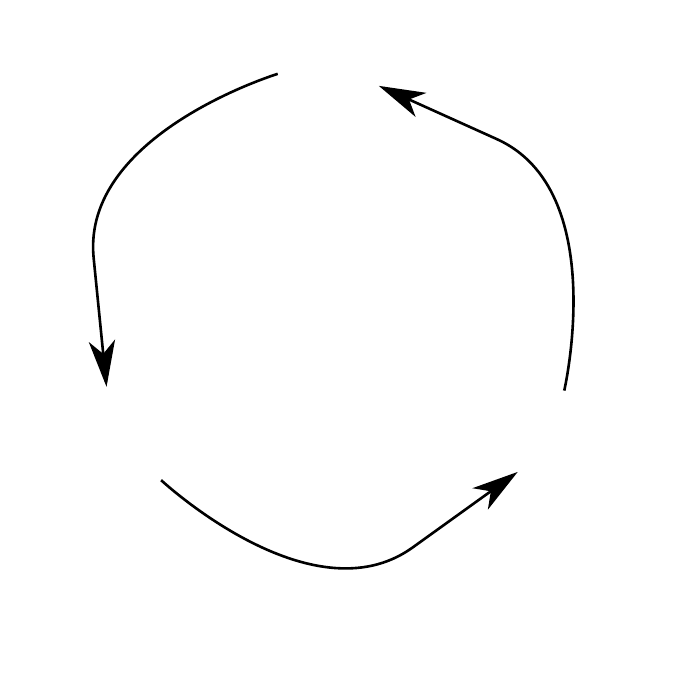}}%
    \put(0.45519134,0.86124596){\color[rgb]{0,0,0}\makebox(0,0)[lt]{\lineheight{1.25}\smash{\begin{tabular}[t]{l}$a$\end{tabular}}}}%
    \put(0.14521182,0.31545303){\color[rgb]{0,0,0}\makebox(0,0)[lt]{\lineheight{1.25}\smash{\begin{tabular}[t]{l}$c$\end{tabular}}}}%
    \put(0.77535213,0.31815275){\color[rgb]{0,0,0}\makebox(0,0)[lt]{\lineheight{1.25}\smash{\begin{tabular}[t]{l}$b$\end{tabular}}}}%
    \put(0.84936652,0.73867277){\color[rgb]{0,0,0}\makebox(0,0)[lt]{\lineheight{1.25}\smash{\begin{tabular}[t]{l}$a$ is preferred \\over $b$\end{tabular}}}}%
    \put(0.46244624,0.05116405){\color[rgb]{0,0,0}\makebox(0,0)[lt]{\lineheight{1.25}\smash{\begin{tabular}[t]{l}$b$ is preferred \\over $c$\end{tabular}}}}%
    \put(0.195163,0.66564551){\color[rgb]{0,0,0}\makebox(0,0)[lt]{\lineheight{1.25}\smash{\begin{tabular}[t]{l}$c$ is pre-\\ferred \\over $a$\end{tabular}}}}%
    \put(-0.21328656,1.37961308){\color[rgb]{0,0,0}\makebox(0,0)[lt]{\begin{minipage}{2.60803564\unitlength}\raggedright \end{minipage}}}%
  \end{picture}%
\endgroup%
 \\

\end{figure}

In conclusion, the voters prefer $a$ over $b$, they prefer $b$ over $c$, and they prefer $c$ over $a$.  So, \textit{no one has won the election}!  This observation is known as Condorcet's paradox.  The simplest way to use rankings of candidates might lead to no one winning the election.

In fact, if we compare pairs of candidates using something other than a majority rule, then some analogue of Condorcet's paradox must still occur, \textit{unless} we ignore all voters except for one (a dictatorship).  This statement can be formalized as Arrow's Impossibility Theorem.

\subsection{Voting Power}

\begin{wrapfigure}{r}{0.5\textwidth}
\vspace{-1cm}
   \def\svgwidth{.5\textwidth}
    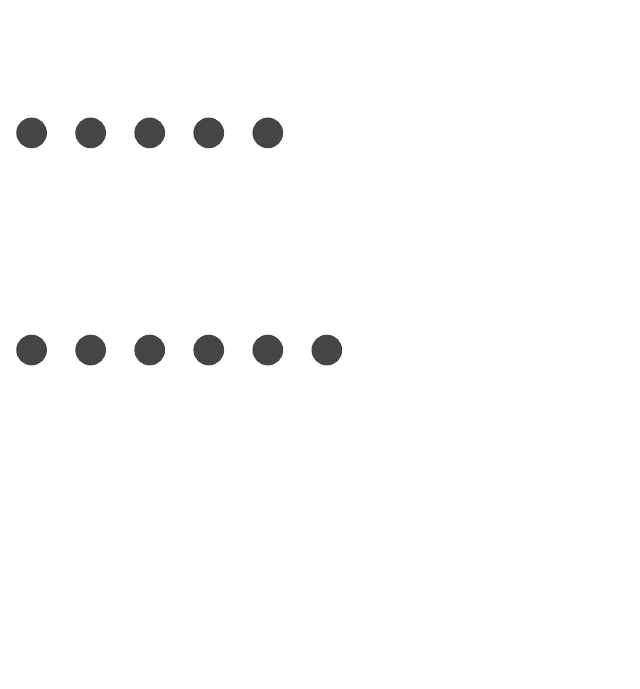 \\
    \vspace{-2cm}
\end{wrapfigure}

Game Theorists such as Shapley, Shubik and Banzhaf in the 1950s and 1960s further developed the mathematical and economical analysis of voting methods.  As an illustrative example, we consider the 1965 restructuring of the UN security council.

\begin{vmethod}[Pre-1965 UN Security Council]
In pre-1965 rules, the UN security council had five permanent members, and six nonpermanent members.  A resolution passes in the security council only if:
\begin{itemize}
\item  All five permanent members want it to pass, and
\item at least \textit{two} nonpermanent members want it to pass.
\end{itemize}
\end{vmethod}

\begin{wrapfigure}{r}{0.5\textwidth}
\vspace{-4cm}
   \def\svgwidth{.5\textwidth}
    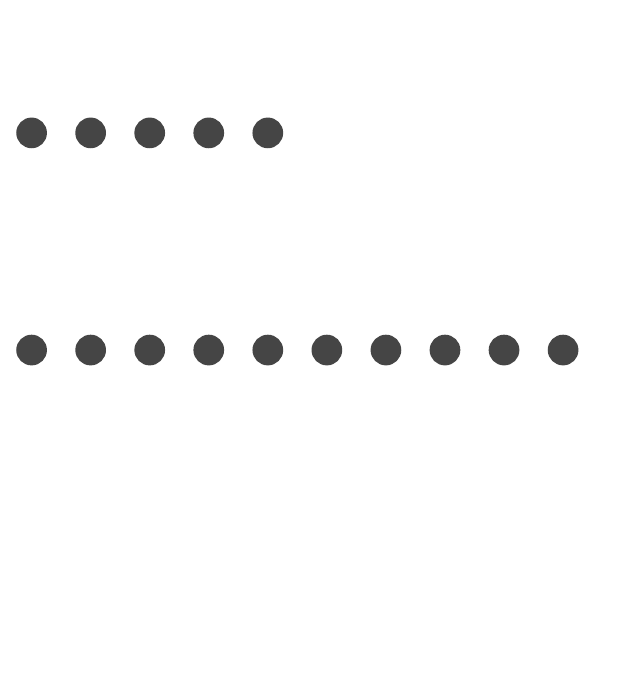 \\
    \vspace{-2cm}
\end{wrapfigure}

In particular, a single permanent member can effectively veto a resolution by voting ``no'' on that resolution.  This voting method was called unfair for the nonpermanent members, so it was restructured in 1965.  After the restructuring, the council had the following form (still in use today).
\begin{vmethod}[Post-1965 UN Security Council]
The UN security council has five permanent members, and now ten nonpermanent members.  A resolution passes in the council only if:
\begin{itemize}
\item  All five permanent members want it to pass, and
\item at least \textit{four} nonpermanent members want it to pass.
\end{itemize}
\end{vmethod}

A rather vague question is then:

\begin{question}\label{q1}
Are the Post-1965 rules more equitable for nonpermanent members of the UN security council than Pre-1965 rules?
\end{question}

There are various ways to answer this question.  One answer, provided by Banzhaf, is to consider the \textbf{power} of a voter in each voting method, i.e. the relative ability of a voter to cause a resolution to pass by changing their vote.  Suppose we label the post-1965 UN security council members by the integers $1$ through $15$, where the numbers $1,2,3,4,5$ represent the five permanent members of the council, and the numbers $6,7,\ldots,15$ represent nonpermament members.  Then, for any integer $i$ between $1$ and $15$, let $b_{i}$ be the number of combinations of votes of members of the council (other than voter $i$), such that when voter $i$ changes their vote from ``no'' to ``yes,'' the resolution changes from not passing to passing.  The \textbf{Banzhaf power index} of a voter $i$ is defined to be the following ratio
$$\frac{b_{i}}{b_{1}+b_{2}+\cdots+b_{15}}.$$

For example, in the post-1965 rules, what would it take for a nonpermanent member to cause the resolution to pass?  First, all permanent members would have to vote ``yes.''  Then, exactly three other nonpermanent members out of nine would vote yes.  So, the number of combinations of votes other members would make is: the number of ways to select $3$ members from a set of $9$, i.e. $\binom{9}{3}=\frac{9\cdot 8\cdot 7}{3\cdot 2}=84$.  So, $b_{6}=b_{7}=\cdots=b_{15}=84$.

In the post-1965 rules, what would it take for a permanent member to cause the resolution to pass?  First, all other permanent members would have to vote ``yes.''  Then, at least four nonpermanent members out of 10 would vote yes.  So, the number of combinations of votes other members would make is: the number of ways to select at least $4$ members from a set of $10$.  This number is $\binom{10}{4}+\binom{10}{5}+\cdots+\binom{10}{10}=848$.  So, $b_{1}=b_{2}=\cdots=b_{5}=848$.

Similar considerations apply for pre-1965 rules.  We summarize the Banzhaf power indices in the following table.


\begin{table}[htbp!]\renewcommand{\arraystretch}{1.5}
\centering
\begin{tabular}{|l|p{5cm}|p{5cm}|c|}
\hline
Voting Method & Banzhaf Power Index for \textit{Non}-Permament Member & Banzhaf Power Index for Permament Member\\
\hline
Pre-1965 Rules & $\frac{5}{6\cdot 5+5\cdot 57}\approx .0159$ & $\frac{57}{6\cdot 5+5\cdot 57}\approx .181$ \\
\hline
Post-1965 Rules & $\frac{84}{10\cdot 84+5\cdot 848}\approx .0165$ & $\frac{848}{10\cdot 84+5\cdot 848}\approx .167$ \\
\hline
\end{tabular}
\caption{Banzhaf Power Indices for UN Security Council Voting Methods}
\end{table}

In summary, the post-1965 rules give more power to non-permanent members, and less power to permanent members of the UN Security Council.  So, according to Banzhaf's definition of voting power, the answer to Question \ref{q1} is: yes.

\subsection{Voting Methods as Functions}

\begin{wrapfigure}{r}{0.5\textwidth}
   \def\svgwidth{.5\textwidth}
    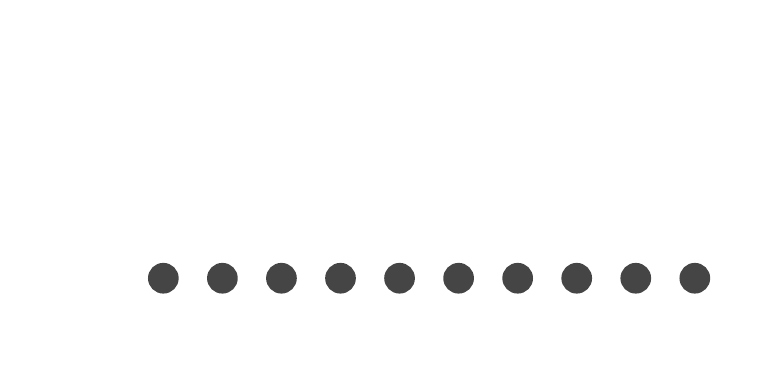 \\
    \vspace{-.5cm}
\end{wrapfigure}

Suppose we run an election between two candidates with $n$ voters, where $n$ is a large integer.  For convenience, we denote the two candidates as $+1$ and $-1$ rather than $a$ and $b$.  If person $i$ votes for candidate $1$, we define $x_{i}=1$, and if person $i$ votes for candidate $-1$, we define $x_{i}=-1$.  We then can then make a list of votes as
$$x=(x_{1},x_{2},\ldots,x_{n}).\qquad\qquad\qquad\qquad\qquad\qquad\qquad\qquad\qquad\qquad$$
A \textbf{voting method} is a function $f$ whose input is the votes $x$ and whose output is the winner of election.  That is, $f(x)=1$ denotes candidate $1$ winning the election when the votes are $x$, and $f(x)=-1$ denotes candidate $-1$ winning the election when the votes are $x$.

\begin{wrapfigure}{r}{0.5\textwidth}
   \def\svgwidth{.5\textwidth}
    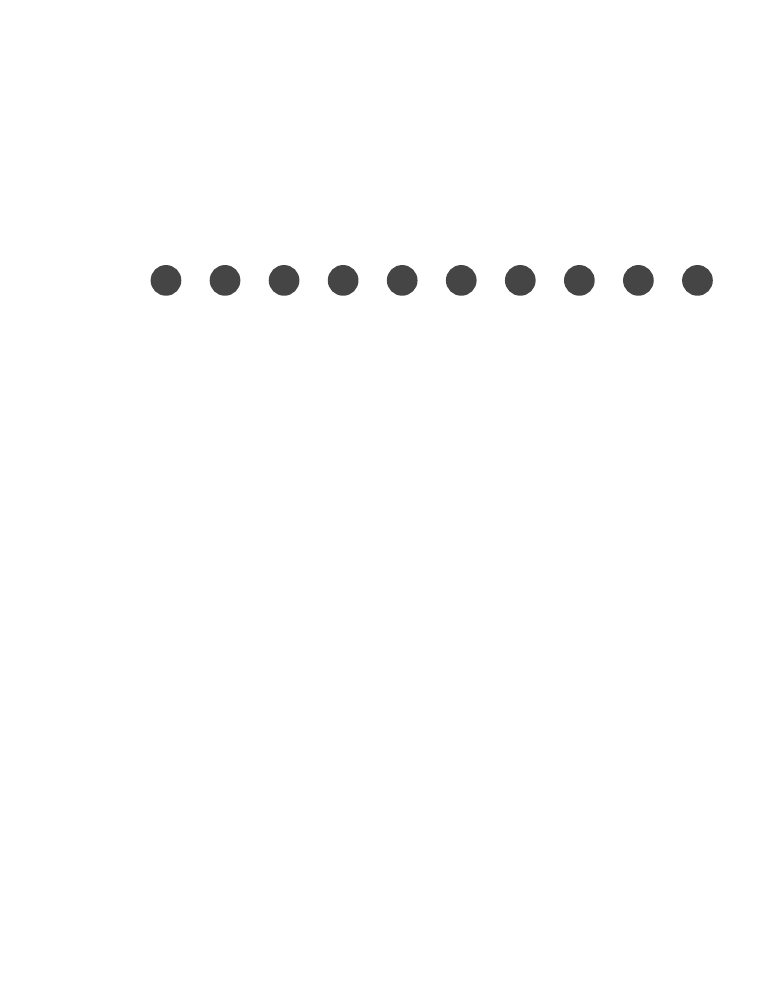 \\
\vspace{-2cm}
\end{wrapfigure}
Some examples of voting methods appear below.

\begin{example}
The \textbf{majority function} is the function
$$f(x)=\mathrm{sign}(x_{1}+x_{2}+\cdots+x_{n}).\qquad\qquad\qquad\qquad\qquad\qquad\qquad\qquad\qquad\qquad$$
If there are more $+1$ votes than $-1$ votes, then $f(x)=1$.  And if there are more $-1$ votes than $+1$ votes, then $f(x)=-1$.  That is, $f$ agrees with our usual notion of majority: the candidate receiving the most votes wins the election.  (To guarantee that someone wins the election, we could just assume that $n$ is odd, so that $f$ never takes the value $0$.)
\end{example}

\begin{example}
A \textbf{dictator function} is a function of the form
$$f(x)=x_{1}.\qquad\qquad\qquad\qquad\qquad\qquad\qquad\qquad\qquad\qquad$$
That is, the vote of the first person is the winner of the election.  In this way, $f$ agrees with our usual notion of dictator: all votes are ignored, except for one.  More generally, if $1\leq i\leq n$, a dictator is a function of the form $$f(x)=x_{i}.$$
\end{example}

\begin{example}
If $w_{1},\ldots,w_{n}$ are fixed real numbers, a \textbf{weighted majority function} on $n$ voters is a function of the form
$$f(x)=\mathrm{sign}(w_{1}x_{1}+w_{2}x_{2}+\cdots+w_{n}x_{n}).$$
If $w_{i}$ is large for some $1\leq i\leq n$, this corresponds to assigning more ``weight'' (i.e. more voting power, or more ``say'') to the $i^{th}$ voter.  And if $w_{i}$ is small, this corresponds to assigning less ``weight'' (i.e. less voting power, or less ``say'') to the $i^{th}$ voter.
\end{example}

\begin{figure}[h!]
 \def\svgwidth{\textwidth}
 \tiny
    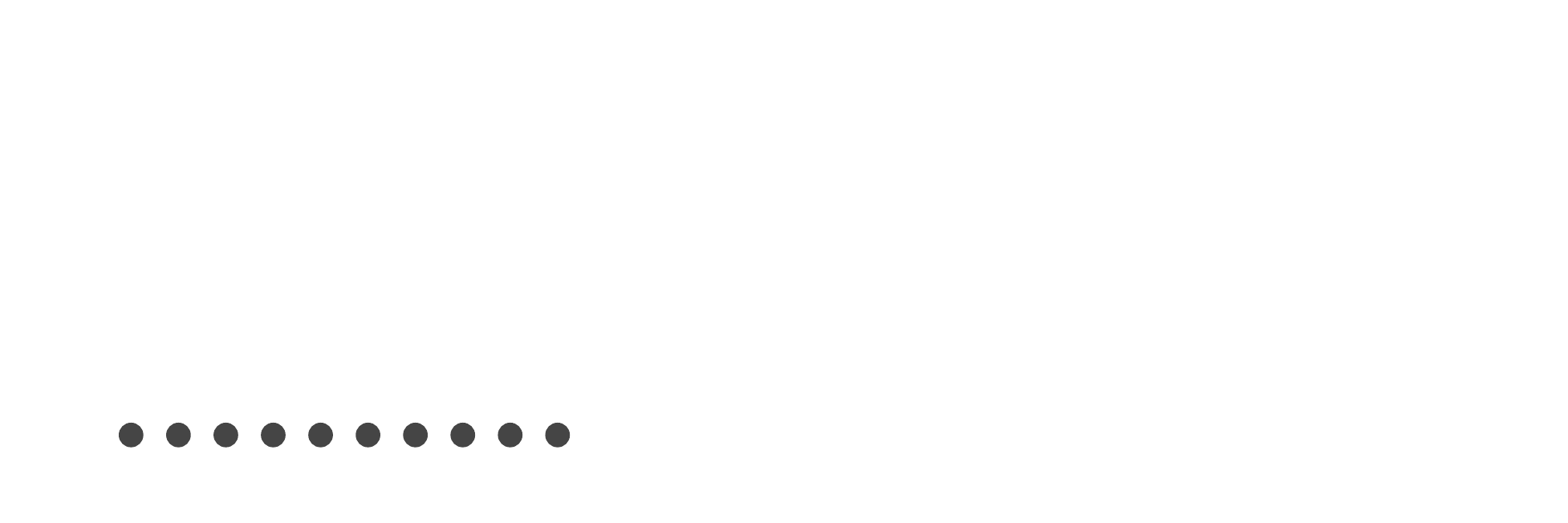 \\
    \normalsize
    \caption{An iterated majority function with $m=3$ ``states.''}\label{itmaj}
\end{figure}

\begin{example}\label{itmajex}
A two-layer \textbf{iterated majority function} is a function of the form
$$f(x)=g(f_{1}(x),f_{2}(x),\ldots,f_{m}(x)),$$
where $f_{1},f_{2},\ldots,f_{m}$ are each weighted majority functions on $n$ voters, and $g$ is a weighted majority function $m$ voters.

A two-layer iterated majority function is similar to an electoral college system with $m$ states.  The US electoral college system then corresponds to $m=51$.
\end{example}
\begin{remark}
In learning theory, the iterated majority function is sometimes called a two-layer neural network with boolean activation function.  The lines and nodes in Figure \ref{itmaj} are then interpreted as axons and neurons, respectively.
\end{remark}

In the ensuing discussion, it is more convenient to replace the Banzhaf power index of a voter with the (almost identical) notion of \textbf{influence} of a voter.

\begin{definition}[\embolden{Influences}]\label{infdef}
Let $f\colon\{-1,1\}^{n}\to\{-1,1\}$ be a voting method.  Let $1\leq i\leq n$ be an integer.  Define the \textbf{influence} of the $i^{th}$ voter on $f$, denote $\mathrm{Inf}_{i}(f)$, as
\begin{flalign*}
\mathrm{Inf}_{i}(f)
&=\frac{\mbox{\# of combinations of votes where the $i^{th}$ voter can change the election's outcome}}{\mbox{\# of combinations  of votes of all voters}}\\
&=\frac{\#\{(x_{1},\ldots,x_{n})\in\{-1,1\}^{n}\colon f(x_{1},\ldots,x_{n})\neq f(x_{1},\ldots,x_{i-1},-x_{i},x_{i+1},\ldots,x_{n})\}}{\#\{(x_{1},\ldots,x_{n})\in\{-1,1\}^{n}\}}.
\end{flalign*}
That is, $\mathrm{Inf}_{i}(f)$ is the probability that the $i^{th}$ voter can change the outcome of the election, when other voters are equally likely to vote for either candidate.
\end{definition}
\begin{example}
The numbers $b_{1},\ldots,b_{n}$ used to define the Banzhaf power indices are just the influences, multiplied by $2^{n}$.  For example, in the post-1965 UN Security council voting method $f\colon\{-1,1\}^{15}\to\{-1,1\}$ with $n=15$ voters,
$$\mathrm{Inf}_{1}(f)=\cdots=\mathrm{Inf}_{5}(f)=\frac{848}{2^{15}}\approx .0259,\qquad
\mathrm{Inf}_{6}(f)=\cdots=\mathrm{Inf}_{15}(f)=\frac{84}{2^{15}}\approx .00256.$$
Put another way, the Banzhaf power indices are the influences, multiplied by a number causing them to sum to $1$. 

\begin{table}[htbp!]\renewcommand{\arraystretch}{1.5}
\centering
\begin{tabular}{|l|p{4cm}|p{4cm}|c|}
\hline
Voting Method & Influence for \textit{Non}-Permament Member & Influence for Permament Member\\
\hline
Pre-1965 Rules & $\frac{5}{2^{11}}\approx .00244$ & $\frac{57}{2^{11}}\approx .0278$ \\
\hline
Post-1965 Rules & $\frac{84}{2^{15}}\approx .00256$ & $\frac{848}{2^{15}}\approx .0259$ \\
\hline
\end{tabular}
\caption{Influences for UN Security Council Voting Methods}
\end{table}

As above, we observe that a non-permanent member has a higher probability of affecting the outcome of a resolution in post-1965 rules.
\end{example}
\begin{example}\label{dictex}
When $f$ is a dictator function of the form $f(x)=x_{1}$, then the first voter can always change the outcome of the election, and the other voters cannot, so
$$I_{1}(f)=1,\quad I_{2}(f)=\cdots=I_{n}(f)=0.$$
When $f$ is a majority function $f(x)=\mathrm{sign}(x_{1}+\cdots+x_{n})$, then an application of Stirling's formula implies that for all $1\leq i\leq n$, $\lim_{n\to\infty}\sqrt{n}I_{i}(f)=\sqrt{\frac{2}{\pi}}$, i.e.
$$I_{1}(f)= I_{2}(f)=\cdots=I_{n}(f)=(1+o(1))\sqrt{\frac{2}{\pi}}\frac{1}{\sqrt{n}}.$$
To see this, note that if $n$ is even, recall that Stirling's Formula implies that
$$\binom{n}{n/2}=\frac{n!}{[(n/2)]!}=(1+o(1))\frac{1}{\sqrt{2\pi}}\frac{\sqrt{n}}{n/2}2^{n}=(1+o(1))2^{n}\frac{1}{\sqrt{n}}\sqrt{\frac{2}{\pi}}.$$
Therefore, $\lim_{n\to\infty}\sqrt{n}I_{i}(f)=\lim_{n\to\infty}(1+o(1))\frac{1}{\sqrt{2\pi}}\frac{2^{n}}{2^{n}}=\frac{1}{\sqrt{2\pi}}$, for all $1\leq i\leq n$.
\end{example}
Perhaps it is a compelling reason to vote in a majority election with one hundred million voters when your probability of changing the election's outcome is around $1$ in ten thousand.

\section{Adversarial Corruption in Voting}

\subsection{Two Candidates}

\begin{wrapfigure}{r}{0.52\textwidth}
   \def\svgwidth{.5\textwidth}
    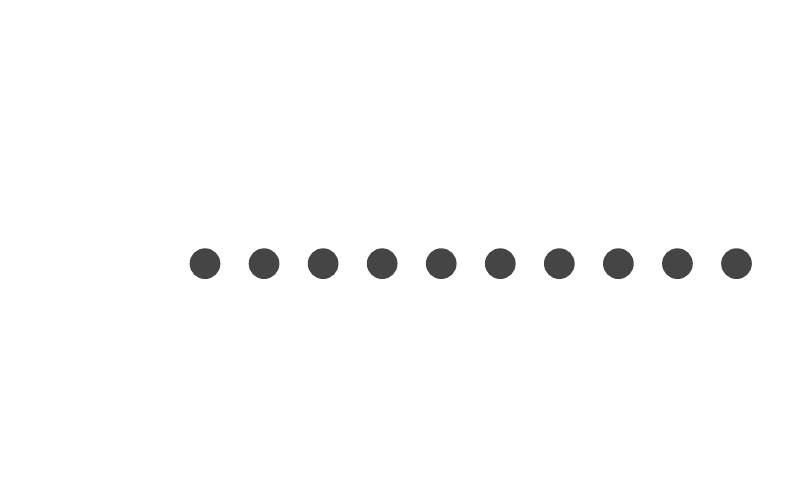 \\
    \vspace{-.5cm}
\end{wrapfigure}

Suppose $n$ people cast their votes in an election between two candidates.  Then, suppose an adversary found a way to change several of the votes.  By changing some votes, the adversary attempts to change the outcome of the election.  Suppose also that the voting method $f\colon\{-1,1\}^{n}\to\{-1,1\}$ is balanced in the following sense.

\begin{definition}[\embolden{Balanced Voting Method}]\label{baldef}
Let $f\colon\{-1,1\}^{n}\to\{-1,1\}$ be a voting method.  We say $f$ is \textbf{balanced} if each of the two candidates has an equal chance of winning the election.  That is, the number of combinations of votes where candidate $1$ wins is equal to the number of combinations of votes where candidate $-1$ wins.
\end{definition}

For example, dictator functions and the majority function are balanced.

\begin{question}
What balanced voting method is most resilient to adversarial changes to votes?

That is, if $k\geq1$ votes can be changed by the adversary, what is the least number of combinations of votes (of all voters) such that the adversary can change the election's outcome?
\end{question}

In a dictatorship, e.g. $f(x_{1},\ldots,x_{n})=x_{1}$, changing the first vote changes the outcome of the election, so this voting method is not at all resilient to adversarial changes.  Similarly, a voting method that is only a function of a small set of voters (sometimes called a junta) will probably not be resilient to adversarial changes to votes.  It turns out that the majority function is the balanced voting method most resilient to adversarial changes; we thank Daniel Kane for telling us the following argument.

\begin{prop}[\embolden{Adversarial Optimality of Majority}]\label{advmaj}
Let $n$ be an odd positive integer and let $k$ be an integer satisfying $1\leq k\leq n$.  After the votes have been cast, suppose an adversary can change $k$ votes in an election between two candidates with $n$ voters.  Then among all balanced voting methods, the majority function has the least number of combinations of votes where the election's outcome can be altered by the adversary.
\end{prop}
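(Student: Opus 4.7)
The plan is to recast the proposition as a vertex-isoperimetric problem on the discrete hypercube $\{-1,1\}^n$ equipped with Hamming distance $d$. Write $A^+ := f^{-1}(1)$ and $A^- := f^{-1}(-1)$, so that $|A^+|=|A^-|=2^{n-1}$ by the balanced hypothesis. For $S \subseteq \{-1,1\}^n$ let $N_k(S) := \{x : d(x,S) \leq k\}$ denote its Hamming $k$-neighborhood, and let $B(f,k)$ denote the set of vulnerable configurations (those for which the adversary can change the outcome by flipping at most $k$ votes). I would first observe that $B(f,k) = N_k(A^+) \cap N_k(A^-)$: indeed if $f(x) = +1$, then $x \in N_k(A^+)$ trivially, and $x$ is vulnerable iff $d(x,A^-) \leq k$, i.e.\ $x \in N_k(A^-)$; the case $f(x) = -1$ is symmetric.

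The next step is a short inclusion-exclusion identity. Since $A^+ \cup A^- = \{-1,1\}^n$ forces $N_k(A^+) \cup N_k(A^-) = \{-1,1\}^n$, one obtains
$$|B(f,k)| = |N_k(A^+)| + |N_k(A^-)| - 2^n.$$
This reduces the original minimization to minimizing $|N_k(A^+)| + |N_k(A^-)|$ over all balanced partitions of the cube.

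I would then invoke Harper's vertex-isoperimetric inequality on the hypercube: among subsets of $\{-1,1\}^n$ of a fixed cardinality, Hamming balls minimize the size of the $k$-neighborhood. Since $n$ is odd, the half-spaces $H^+ := \{x : \sum_i x_i \geq 1\}$ and $H^- := \{x : \sum_i x_i \leq -1\}$ are genuine Hamming balls of radius $(n-1)/2$, each of cardinality exactly $2^{n-1}$. Applying Harper's inequality separately to $A^+$ and $A^-$ gives $|N_k(A^\pm)| \geq |N_k(H^\pm)|$; plugging into the identity above yields
$$|B(f,k)| \geq |N_k(H^+)| + |N_k(H^-)| - 2^n = |B(\mathrm{Maj},k)|,$$
which is exactly the proposition.

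The main technical input is Harper's inequality itself; for an expository article it suffices to cite it. A self-contained proof of the required inequality, which I expect is what Kane's argument supplies, proceeds by a shifting/compression scheme: iteratively replace each of $A^\pm$ by its image under a coordinate-wise ``push-down'' operator, which preserves cardinality and never enlarges $|N_k(\cdot)|$. After finitely many such compressions each set becomes an initial segment in the simplicial order, which at cardinality $2^{n-1}$ (with $n$ odd) coincides with a Hamming half-ball, completing the argument.
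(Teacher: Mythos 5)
Your proposal is correct and follows essentially the same route as the paper: both arguments split the cube into the two level sets $f^{-1}(1)$ and $f^{-1}(-1)$, apply Harper's vertex-isoperimetric inequality to each (using that for odd $n$ the majority half-cubes are exact Hamming balls of cardinality $2^{n-1}$), and add the two bounds. The only cosmetic difference is that the paper derives the $k$-neighborhood statement by induction on $k$ from the one-step Harper inequality, while you invoke the $k$-neighborhood form directly and make the combination of the two halves explicit via the identity $\absf{N_k(A^+)}+\absf{N_k(A^-)}-2^n$ for the vulnerable set.
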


Before beginning the proof, we introduce some notation.  For any $x=(x_{1},\ldots,x_{n})\in\R^{n}$, denote the $\ell_{0}$ ``norm'' of $x$ by $\vnorm{x}_{0}\colonequals\#\{1\leq i\leq n\colon x_{i}\neq 0\}$.  (This quantity is not a norm since $\vnorm{tx}_{0}=\vnorm{x}_{0}$ for any $t\neq0$.)  Let $S\subset\{-1,1\}^{n}$.  For any integer $k\geq1$, we denote the distance $k$ neighborhood of $S$ by
\begin{equation}\label{gammadef}
\Gamma_{k}(S)\colonequals\{x\in\{-1,1\}^{n}\colon \exists\, y\in S\,\,\mathrm{such}\,\,\mathrm{that}\,\, \vnorm{x-y}_{0}\leq k\}.
\end{equation}
Then $\Gamma_{k}(S)$ is the set of possible votes that can be obtained by changing at most $k$ votes from a given $y\in S$.  For any $k\geq0$, let $B_{k}\subset\{-1,1\}^{n}$ be a distance $k$ neighborhood of one ``half'' of the hypercube:
\begin{equation}\label{bkdef}
B_{k}\colonequals\Gamma_{k}\big(\{(y_{1},\ldots,y_{n})\in\{-1,1\}^{n}\colon y_{1}+\cdots+y_{n}\geq 0\}\big).
\end{equation}

\begin{wrapfigure}{r}{0.53\textwidth}
   \def\svgwidth{.47\textwidth}
\begingroup%
  \makeatletter%
  \providecommand\color[2][]{%
    \errmessage{(Inkscape) Color is used for the text in Inkscape, but the package 'color.sty' is not loaded}%
    \renewcommand\color[2][]{}%
  }%
  \providecommand\transparent[1]{%
    \errmessage{(Inkscape) Transparency is used (non-zero) for the text in Inkscape, but the package 'transparent.sty' is not loaded}%
    \renewcommand\transparent[1]{}%
  }%
  \providecommand\rotatebox[2]{#2}%
  \newcommand*\fsize{\dimexpr\f@size pt\relax}%
  \newcommand*\lineheight[1]{\fontsize{\fsize}{#1\fsize}\selectfont}%
  \ifx\svgwidth\undefined%
    \setlength{\unitlength}{293.72746265bp}%
    \ifx\svgscale\undefined%
      \relax%
    \else%
      \setlength{\unitlength}{\unitlength * \real{\svgscale}}%
    \fi%
  \else%
    \setlength{\unitlength}{\svgwidth}%
  \fi%
  \global\let\svgwidth\undefined%
  \global\let\svgscale\undefined%
  \makeatother%
  \begin{picture}(1,0.93069383)%
    \lineheight{1}%
    \setlength\tabcolsep{0pt}%
    \put(0,0){\includegraphics[width=\unitlength,page=1]{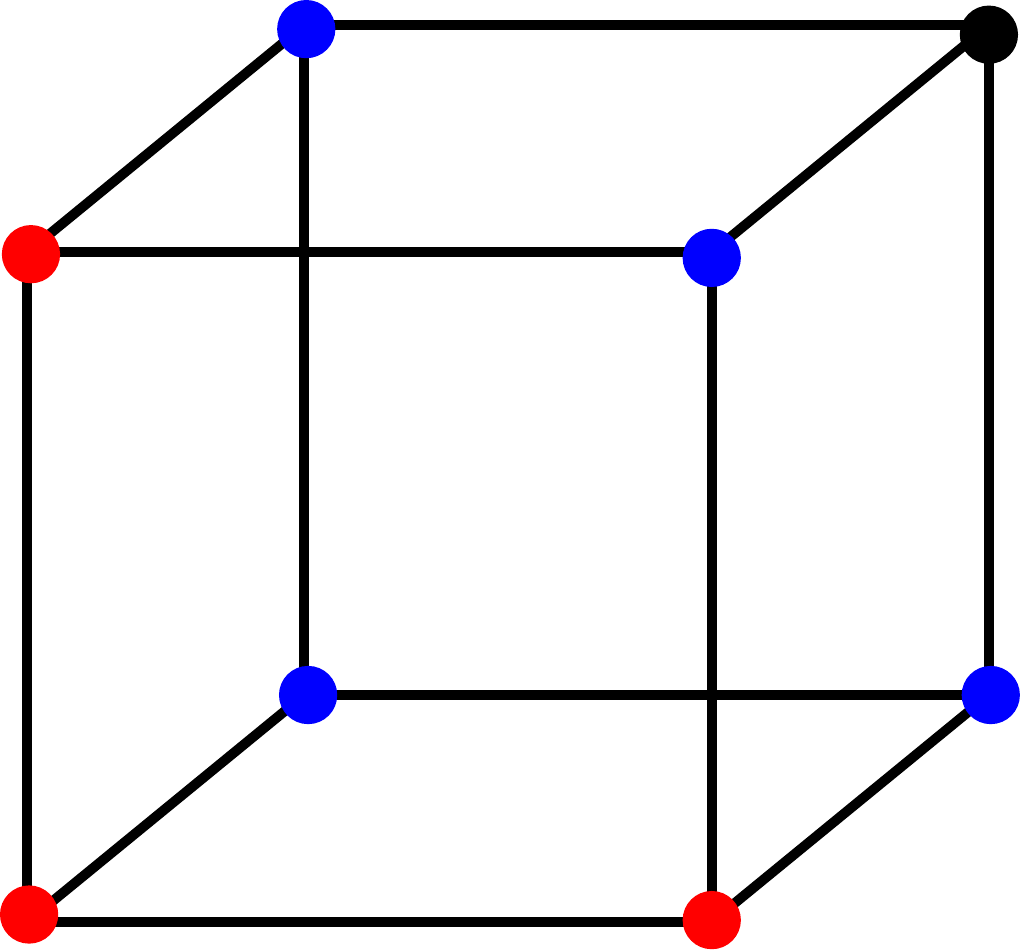}}%
    \put(0.14773172,0.05471559){\color[rgb]{0,0,0}\makebox(0,0)[lt]{\lineheight{1.25}\smash{\begin{tabular}[t]{l}$\textcolor{red}{S}$\end{tabular}}}}%
    \put(0.31151324,0.56539297){\color[rgb]{0,0,0}\makebox(0,0)[lt]{\lineheight{1.25}\smash{\begin{tabular}[t]{l}The blue region\\and red region\\together \\form $\Gamma_{1}(S)$\end{tabular}}}}%
  \end{picture}%
\endgroup%
 \\
    \vspace{.2cm}
\end{wrapfigure}

The key geometric fact used to prove Proposition \ref{advmaj} is:
\begin{theorem}[\embolden{Harper's Inequality/ Hypercube Vertex Isoperimetric Inequality}]\label{harper}
Let $S\subset\{-1,1\}^{n}$.  Let $k\geq0$.  Assume that
$$\abs{S}\geq\abs{B_{k}}.\qquad\qquad\qquad\qquad\qquad\qquad\qquad\qquad\qquad\qquad$$
Then
$$\abs{\Gamma_{1}(S)}\geq\abs{\Gamma_{1}(B_{k})}.\qquad\qquad\qquad\qquad\qquad\qquad\qquad\qquad\qquad\qquad$$
\end{theorem}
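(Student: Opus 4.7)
The plan is to prove Harper's theorem by the classical compression (or \emph{shifting}) method. For each coordinate $i \in \{1,\ldots,n\}$, I would define an $i$-compression operator $C_i$ on subsets of $\{-1,1\}^n$ as follows. Partition $\{-1,1\}^n$ into $2^{n-1}$ pairs $\{x^{+},x^{-}\}$, where $x^{+}$ and $x^{-}$ agree in every coordinate except the $i$-th and have $x^{+}_i = +1$, $x^{-}_i = -1$. Then $C_i(S)$ is obtained from $S$ by, on each pair containing exactly one element of $S$, replacing that element by $x^{+}$; on pairs with both or neither element in $S$, membership is left unchanged.

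The three key properties to establish are: (i) $|C_i(S)| = |S|$, since $C_i$ only swaps within pairs; (ii) $|\Gamma_1(C_i(S))| \leq |\Gamma_1(S)|$, which is proved by a case analysis that compares, for each $j \neq i$, the coordinate-$j$ neighbors of $x^{+}$ and $x^{-}$ and checks that compression never creates a net-new boundary element; and (iii) iterating the $C_i$'s eventually produces a set $S^{*}$ fixed by every $C_i$, because the quantity $\sum_{x \in S}(x_1 + \cdots + x_n)$ is bounded above by $n|S|$ and strictly increases at each nontrivial compression step. From (i)--(iii) we obtain $|S^{*}| = |S|$ and $|\Gamma_1(S^{*})| \leq |\Gamma_1(S)|$, with $S^{*}$ \textbf{monotone}: whenever $x \in S^{*}$ has $x_i = -1$, the string obtained by flipping the $i$-th coordinate to $+1$ also lies in $S^{*}$.

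The next step is to reduce $S^{*}$ further to an initial segment of the simplicial ordering on $\{-1,1\}^n$ (strings ordered first by the number of $-1$'s, then by a fixed within-layer ordering) without increasing $|\Gamma_1|$. This is a second round of shifting, replacing $S^{*}$ inside each Hamming weight level by an initial segment; one checks that this preserves cardinality and does not increase $|\Gamma_1|$, using the monotonicity from the first round. The ball $B_k$, being exactly the strings with at most $(n-1)/2 + k$ coordinates equal to $-1$, is a union of complete weight layers and therefore is itself an initial segment of the simplicial order, consisting of the first $|B_k|$ elements. Hence the final set $S^{**}$ contains $B_k$, so $\Gamma_1(S^{**}) \supseteq \Gamma_1(B_k)$, and chaining the inequalities yields $|\Gamma_1(S)| \geq |\Gamma_1(S^{*})| \geq |\Gamma_1(S^{**})| \geq |\Gamma_1(B_k)|$.

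I expect the main obstacle to be step (ii): showing that each single-coordinate compression never enlarges $\Gamma_1$. The crux is pairing up, for each $j \neq i$, the two elements $x^{+}, x^{-}$ with their coordinate-$j$ neighbors, enumerating the four possible membership configurations, and demonstrating that any new element appearing in $\Gamma_1(C_i(S))$ is matched by an element of $\Gamma_1(S)$ that disappears under the compression, yielding a nonpositive net change. A secondary technical nuisance is the second round of shifting within Hamming weight layers (essentially a discrete Kruskal--Katona-type argument), but once that is in place the conclusion follows immediately from the fact that $B_k$ is a complete union of bottom layers in the simplicial order.
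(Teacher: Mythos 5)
First, a point of comparison: the paper does not prove Theorem \ref{harper} at all. It is quoted as a known classical result (Harper's vertex-isoperimetric inequality) and used as a black box in the proof of Proposition \ref{advmaj}, so your attempt has to be judged on its own merits rather than against a proof in the text.

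Your first round is sound: the monotone shift $C_{i}$ preserves cardinality, one can check $\Gamma_{1}(C_{i}(S))\subseteq C_{i}(\Gamma_{1}(S))$ so that $\absf{\Gamma_{1}}$ does not increase, and the potential-function argument terminates at an up-set $S^{*}$. The genuine gap is in the second round and the concluding containment. Replacing each Hamming-weight layer of $S^{*}$ by an initial segment of a within-layer (colex) order produces a set whose layers are individually compressed, but such a set need \emph{not} be an initial segment of the simplicial order: an up-set can have several consecutive incomplete layers. Concretely, for $n=3$ take $S=\{x\in\{-1,1\}^{3}\colon x_{1}=+1\}$. This set is fixed by every $C_{i}$, each of its layers is already an initial segment of colex, and $\absf{S}=4=\absf{B_{0}}$; yet $S\not\supseteq B_{0}$ (it misses $(-1,+1,+1)$) and $\absf{\Gamma_{1}(S)}=8>7=\absf{\Gamma_{1}(B_{0})}$. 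So your final step, ``$S^{**}$ is an initial segment of the simplicial order, hence contains $B_{k}$,'' is false for sets that your compressions can output, and the inequality $\absf{\Gamma_{1}(S^{**})}\geq\absf{\Gamma_{1}(B_{k})}$ is left unproved precisely where the theorem has its content. (The inequality does hold for this $S$, but nothing in your argument establishes it.)

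A secondary but nontrivial issue is that even the layer-by-layer replacement you describe silently invokes the Kruskal--Katona theorem (shadows of colex initial segments are minimal and are again initial segments); that is a theorem of comparable depth, not a routine check. To close the main gap you need one of the standard devices: either Harper's codimension-one compressions, which replace $S\cap\{x_{i}=+1\}$ and $S\cap\{x_{i}=-1\}$ by initial segments of the simplicial order on the two facets and then analyze the few sets fixed by all such compressions; or a genuine derivation from Kruskal--Katona that bounds $\absf{\Gamma_{1}(S)}$ below by $\sum_{\ell}\max(\absf{S_{\ell}},\absf{\Delta S_{\ell+1}})$ and then runs an exchange argument between layers to show the simplicial initial segment is optimal. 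Your sketch stops short of either.
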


\begin{proof}[Proof of Proposition \ref{advmaj}]
We induct on $k$.  Let $f\colon\{-1,1\}^{n}\to\{-1,1\}$ be the majority function, and let $g\colon\{-1,1\}^{n}\to\{-1,1\}$ be another balanced voting method.  Let $S\colonequals\{x\in\{-1,1\}^{n}\colon g(x)=1\}$ be the set of votes where candidate $1$ wins the election, when $g$ is the voting method used to run the election.  Note that $B_{0}=\{x\in\{-1,1\}^{n}\colon f(x)=1\}$.  Since $f$ and $g$ are balanced, $\abs{S}=\abs{B_{0}}=2^{n-1}$.  So, Harper's Inequality, Theorem \ref{harper}, implies that
\begin{equation}\label{one1}
\abs{\Gamma_{1}(S)}\geq\abs{\Gamma_{1}(B_{0})}.
\end{equation}
\begin{equation}\label{one2}
\abs{\Gamma_{1}(S)}-\abs{S}\geq\abs{\Gamma_{1}(B_{0})}-\abs{B_{0}}.
\end{equation}
The same inequality holds also when $S\colonequals\{x\in\{-1,1\}^{n}\colon g(x)=-1\}$.  Taken together, we conclude that the number of combinations of votes for which the outcome of the election can be altered with one adversarial vote change is smallest for the majority vote $f$ (since $f$ corresponds to the right side of \eqref{one2}).  The case $k=1$ therefore follows by \eqref{one2}.

We now proceed with the inductive step.  By the inductive hypothesis, if $S\colonequals\{x\in\{-1,1\}^{n}\colon g(x)=1\}$ or if $S\colonequals\{x\in\{-1,1\}^{n}\colon g(x)=-1\}$, we have
$$\abs{\Gamma_{k}(S)}-\abs{S}\geq\abs{\Gamma_{k}(B_{0})}-\abs{B_{0}}.$$
That is, $\abs{\Gamma_{k}(S)}\geq\abs{\Gamma_{k}(B_{0})}=\abs{B_{k}}$.  We need to prove the case $k+1$.  This again follows by Harper's Inequality, Theorem \ref{harper}, since
$$\abs{\Gamma_{k+1}(S)}\stackrel{\eqref{gammadef}}{=}\abs{\Gamma_{1}(\Gamma_{k}(S))}\geq\abs{\Gamma_{1}(B_{k})}\stackrel{\eqref{bkdef}}{=}\abs{B_{k+1}},$$
Therefore, when $S\colonequals\{x\in\{-1,1\}^{n}\colon g(x)=1\}$ or $S\colonequals\{x\in\{-1,1\}^{n}\colon g(x)=-1\}$,
\begin{equation}\label{one3}
\abs{\Gamma_{k+1}(S)}-\abs{S}\geq \abs{B_{k+1}}-\abs{B_{0}}.
\end{equation}
That is, the number of votes for which the outcome of the election can be altered with $k+1$ adversarial vote changes is smallest for the voting method $f$ (since the majority vote $f$ corresponds to the right side of \eqref{one3}).  The inductive step and the proof are complete.
\end{proof}

For some related observations for ranked choice voting, see e.g. \cite[Lemma 3.3]{mossel13}.

Proposition \ref{advmaj} can easily be extended to unbalanced voting methods.  To state such a result, let $t$ be a real number and define a \textbf{majority function with threshold $t$} to be a function of the form
$$\mathrm{Maj}_{n,t}(x)=\mathrm{sign}(x_{1}+x_{2}+\cdots+x_{n}-t),\qquad\forall\,x=(x_{1},\ldots,x_{n})\in\{-1,1\}^{n}.$$
Also, we say that two voting methods $f,g\colon\{-1,1\}^{n}\to\{-1,1\}$ \textbf{have the same balance} if the number of combinations of votes resulting in candidate $1$ winning are the same for each voting method, i.e.
$$\#\{(x_{1},\ldots,x_{n})\in\{-1,1\}^{n}\colon f(x)=1\}=\#\{(x_{1},\ldots,x_{n})\in\{-1,1\}^{n}\colon g(x)=1\}.$$
For example, the majority function with threshold $t=0$ and the majority function with threshold $t=1$ do not have the same balance.

\begin{prop}[\embolden{Adversarial Optimality of Majority, Unbalanced Case}]\label{advmaj2}
Let $n$ be an odd positive integer and let $k$ be an integer satisfying $1\leq k\leq n$.  After the votes have been cast, suppose an adversary can change $k$ votes in an election between two candidates with $n$ voters.  Let $f$ be a majority function with threshold $t$, where $t$ is an even integer.  Let $g$ be another voting method such that $f$ and $g$ have the same balance.  Then the number of combinations of votes where the election's outcome can be altered by the adversary is lesser for $f$ than for $g$.
\end{prop}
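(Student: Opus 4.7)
The plan is to mimic the proof of Proposition \ref{advmaj}, replacing the half-cube $B_{0}$ with the appropriate Hamming ball determined by the threshold $t$. Write $F^{+}\colonequals\{x\in\{-1,1\}^{n}\colon f(x)=1\}$ and $F^{-}\colonequals\{x\in\{-1,1\}^{n}\colon f(x)=-1\}$. Because $n$ is odd and $t$ is even, the sum $x_{1}+\cdots+x_{n}$ is always odd and hence never equals $t$, so $f$ takes values in $\{-1,1\}$ and $F^{+},F^{-}$ partition $\{-1,1\}^{n}$. Crucially, $F^{+}$ is exactly the Hamming ball around $(1,1,\ldots,1)$ of radius $r=(n-t-1)/2$, and $F^{-}$ is the Hamming ball around $(-1,-1,\ldots,-1)$ of the complementary radius. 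A direct computation shows that for every $k\geq 0$ the neighborhood $\Gamma_{k}(F^{+})=\{x\colon x_{1}+\cdots+x_{n}>t-2k\}$ remains a Hamming ball around $(1,\ldots,1)$, and analogously for $F^{-}$.

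Set $S^{+}\colonequals\{x\colon g(x)=1\}$ and $S^{-}\colonequals\{x\colon g(x)=-1\}$. Since $f$ and $g$ have the same balance, $|S^{+}|=|F^{+}|$ and $|S^{-}|=|F^{-}|$. First I would invoke the general vertex isoperimetric inequality on the discrete hypercube---of which Theorem \ref{harper} is the special case $t=0$---which asserts that among all subsets of $\{-1,1\}^{n}$ of a given size, a Hamming ball minimizes the size of the distance-$1$ neighborhood. Applied to $S^{\pm}$ with extremizer $F^{\pm}$, this gives $|\Gamma_{1}(S^{\pm})|\geq|\Gamma_{1}(F^{\pm})|$.

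Next, I would iterate this bound in $k$ by induction, exactly as in the proof of Proposition \ref{advmaj}. The inductive step uses that $\Gamma_{k}(F^{\pm})$ is itself a Hamming ball (by the observation in the first paragraph), so applying the generalized Harper inequality once more to $\Gamma_{k}(S^{\pm})$ yields $|\Gamma_{k+1}(S^{\pm})|\geq|\Gamma_{k+1}(F^{\pm})|$. Finally I would observe that the number of vote combinations whose outcome the adversary can flip equals
\[
\bigl(|\Gamma_{k}(S^{+})|-|S^{+}|\bigr)+\bigl(|\Gamma_{k}(S^{-})|-|S^{-}|\bigr),
\]
since a vote combination $x$ is flippable precisely when $x$ lies within Hamming distance $k$ of a combination producing the opposite winner. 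By the inequalities just proved and the equalities $|S^{\pm}|=|F^{\pm}|$, this quantity is at least the analogous expression with $S^{\pm}$ replaced by $F^{\pm}$, which is the desired conclusion.

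The main obstacle is that Theorem \ref{harper} as stated only handles the case where the extremal set is the $k$-neighborhood of the ``majority'' half-cube, corresponding to $t=0$. For an arbitrary even threshold $t$ the relevant extremal sets are Hamming balls of other radii, so I would need to invoke the full Hamming-ball form of the vertex isoperimetric inequality for the hypercube (Harper's theorem in its standard formulation). Once that strengthening is granted, the inductive bookkeeping and the flippable-combination accounting are cosmetically identical to those of the balanced case.
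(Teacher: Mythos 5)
Your proposal is correct, and it is precisely the extension the paper has in mind: the paper gives no written proof of Proposition \ref{advmaj2}, remarking only that Proposition \ref{advmaj} ``can easily be extended,'' and your argument carries out that extension in the natural way (identify $F^{\pm}$ as complementary Hamming balls, apply the vertex isoperimetric inequality, and induct on $k$ exactly as in the balanced case). You also correctly flag the one substantive point, namely that Theorem \ref{harper} as stated only covers the neighborhoods $B_{k}$ of the half-cube, so one must invoke Harper's theorem in its full Hamming-ball form to handle the radius $(n-t-1)/2<(n-1)/2$ side; that is a standard formulation of Harper's inequality, so no gap remains.
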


\subsection{More than Two Candidates}

It would be desirable to have an analogue of Proposition \ref{advmaj} for voting methods with more than two candidates.  Such a result might require a version of Harper's Inequality, Theorem \ref{harper}, for multiple sets.  It is unclear if such an inequality can be proven

\subsection{Additional Comments}

Proposition \ref{advmaj} can be strengthened slightly, so that a voting method that is ``close'' to being as resilient as majority must itself be ``close'' to majority.  Instead of applying Theorem \ref{harper}, one instead uses a stronger version, such as \cite{keevash18}.

The majority function is known to be optimal in various senses.  For example, the majority function maximizes the number of votes that agree with the outcome of the election \cite[Theorem 2.33]{odonnell14}.  Apparently Rousseau argued this was an ideal choice for a voting method in 1762 in ``Du contrat social.''  Theorem \ref{mis} below, the Majority is Stablest Theorem, also characterizes the majority function as being the most stable to random corruption in votes, among a reasonable class of voting methods.

For more background on social choice theory, see e.g. \cite[Chapter 2]{odonnell14}, \cite{odonnell14b}, \cite[Section 3]{kalai18}.

\section{Independent Random Corruption of Votes}

\begin{wrapfigure}{r}{0.5\textwidth}
   \def\svgwidth{.5\textwidth}
    \input{vote6.pdf_tex} \\
    \vspace{-1cm}
\end{wrapfigure}

In Proposition \ref{advmaj}, we showed that the majority function is the most stable voting method to adversarial corruption.  The majority function is also most stable when votes are corrupted randomly, as shown below.

\begin{theorem}[\embolden{Majority is Stablest, Informal Version}, {\cite[Theorem 4.4]{mossel10}}]\label{misinf}
Suppose we run an election with a large number $n$ of voters and two candidates.  In this election, voters are modelled to have the following random behavior:
\begin{itemize}
\item[(i)] Voters cast their votes randomly, independently, with equal probability of voting for either candidate.
\item[(ii)] Each voter has a small influence on the outcome of the election.  (That is, all influences from Definition \ref{infdef} are small.)
\end{itemize}
Then the majority function is the balanced voting method that best preserves the outcome of the election, when votes have been corrupted independently.
\end{theorem}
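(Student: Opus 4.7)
First I would formalize the theorem using Fourier analysis on the Boolean cube. Define the \emph{noise stability} of a voting method $f\colon\{-1,1\}^n\to\{-1,1\}$ at parameter $\rho\in[0,1]$ by $\mathrm{Stab}_\rho(f)\colonequals\E[f(x)f(y)]$, where $x$ is uniform on $\{-1,1\}^n$ and $y$ is produced from $x$ by independently re-randomizing each coordinate with probability $1-\rho$. The probability that the election outcome agrees on $x$ and $y$ equals $(1+\mathrm{Stab}_\rho(f))/2$, so the theorem reduces to showing
\[
\mathrm{Stab}_\rho(f)\leq \mathrm{Stab}_\rho(\mathrm{Maj}_n)+o(1)
\]
uniformly over all balanced $f$ whose influences satisfy $\mathrm{Inf}_i(f)\leq\tau$ for all $i$, as $\tau\to0$.

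Expanding $f$ in the orthonormal multilinear Fourier basis gives $f=\sum_{S\subset\{1,\ldots,n\}}\hat f(S)\chi_S$ with $\chi_S(x)=\prod_{i\in S}x_i$, and a direct calculation yields
\[
\mathrm{Stab}_\rho(f)=\sum_{S}\rho^{|S|}\hat f(S)^2,\qquad \mathrm{Inf}_i(f)=\sum_{S\ni i}\hat f(S)^2.
\]
Parseval gives $\sum_S\hat f(S)^2=1$ and balance says $\hat f(\emptyset)=0$, so noise stability becomes a weighted tally that penalizes high-degree Fourier mass and the problem acquires a concrete quadratic-form flavor.

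The heart of the argument is the \emph{invariance principle} of Mossel--O'Donnell--Oleszkiewicz combined with Borell's Gaussian isoperimetric inequality. First, apply the noise operator $T_{1-\eta}$ for a small $\eta>0$; this multiplies each coefficient $\hat f(S)$ by $(1-\eta)^{|S|}$, so all but a negligible $L^2$-mass of the smoothed polynomial sits on degrees $\leq D(\eta)$, while $\mathrm{Stab}_\rho$ is preserved up to an $O(\eta)$ error. Next, the invariance principle asserts that when all influences of a low-degree multilinear polynomial $p$ are at most $\tau$, then $p$ evaluated at i.i.d.\ uniform $\pm1$ inputs has nearly the same distribution (tested against smooth functions) as $p$ evaluated at i.i.d.\ standard Gaussian inputs, with error $O(\tau^{c/D})$. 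This permits one to pass from $f$ to a $[-1,1]$-valued function $F$ of Gaussian inputs with $\E F=0$ and essentially the same noise stability. Borell's 1985 inequality then says that among all such $F$, the Ornstein--Uhlenbeck noise stability at parameter $\rho$ is maximized by the signed indicator of a halfspace, with maximum equal to $1-\tfrac{2}{\pi}\arccos\rho$. Finally, a direct central limit theorem computation shows $\mathrm{Stab}_\rho(\mathrm{Maj}_n)\to 1-\tfrac{2}{\pi}\arccos\rho$, matching the Gaussian bound and closing the loop.

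The main obstacle is the invariance principle itself. Its proof requires a hypercontractive estimate (Bonami--Beckner) to bound the fourth moments of low-degree polynomial slices, together with a Lindeberg-style argument that replaces $\pm1$ coordinates by Gaussians one at a time, with per-step error proportional to a small power of $\mathrm{Inf}_i$. Balancing the truncation degree $D$, the smoothing parameter $\eta$, and the influence bound $\tau$ so that all error terms vanish simultaneously is the subtle quantitative part. Borell's inequality, while nontrivial, is classical (via Gaussian rearrangement) and is the cleaner half of the argument; the central limit theorem calculation reducing $\mathrm{Stab}_\rho(\mathrm{Maj}_n)$ to the bivariate normal orthant probability is then routine.
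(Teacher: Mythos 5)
Your proposal is correct and is precisely the Mossel--O'Donnell--Oleszkiewicz argument --- Fourier expansion of noise stability, smoothing to low degree, the invariance principle (via hypercontractivity and a Lindeberg replacement), Borell's Gaussian noise-stability inequality, and the Sheppard/CLT computation giving $\lim_{n\to\infty}S_{\rho}(\mathrm{Maj}_{n})=\tfrac{2}{\pi}\sin^{-1}(\rho)=1-\tfrac{2}{\pi}\arccos(\rho)$ --- which is exactly the route this survey points to: the paper does not prove the statement itself, but formalizes it as Theorem \ref{mis}, cites \cite{mossel10}, and alludes to the same Gaussian-isoperimetric mechanism in Section \ref{otherapp}. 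In short, your sketch correctly supplies the proof that the paper defers to its reference.
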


The definition of ``best'' here is intentionally vague.  We will define ``best'' to mean: maximizing noise stability, as defined below in Definition \ref{nsdef}.  Also, the probability of each vote being changed (corrupted) should be less than $1/2$ in Theorem \ref{misinf}.  Otherwise the majority preferences of the electorate are reversed upon corruption.

Some remarks concerning the sensibility of the assumptions of Theorem \ref{misinf} now follow.

\begin{itemize}
\item Suppose we completely ignore the votes, and just declare that the first candidate wins.  This voting method is as stable to vote corruption as one can imagine, since any amount of corruption in votes cannot change the outcome of the election.  Since this voting method is certainly undemocratic and uninteresting, some assumption in Theorem \ref{misinf} must eliminate it.  And indeed, this voting method is not balanced, so Theorem \ref{misinf} ignores it.  This voting method corresponds to a constant function $f$.
\item As we saw in Example \ref{dictex}, a dictator function has one large influence, and the remaining voters have no influence on the election's outcome.  Consequently, the dictator voting method is quite stable to independently random changes to votes, since changing the votes of the non-dictators has no effect on the election's outcome.  So, as in the previous example, the dictator function is rather stable to vote corruption for a rather uninteresting reason.  We therefore eliminate dictator functions from consideration by imposing the democratic assumption $(ii)$ that each voter has a small influence on the outcome of the election.
\end{itemize}

\subsection{Two Candidates}

In this section, we will formalize the assumptions in Theorem \ref{misinf}, resulting in the formal version of the Majority is Stablest Theorem \ref{mis}.

\begin{assumption}[\embolden{Voter Assumptions}]\label{voteas}
\hfill
\begin{itemize}
\item There are $n$ voters denoted $\{1,\ldots,n\}$.  There are two candidates denoted $-1$ and $1$.
\item For any $1\leq i\leq n$, the $i^{th}$ voter casts a single \textit{random} vote $X_{i}$ taking the value $-1$ or $1$.  (In particular, we are not dealing with ranked voting methods)
\item The votes $(X_{1},\ldots,X_{n})$ are independent, identically distributed (i.i.d.) random variables.  That is, voters are modelled as independent decision makers with the same probabilities of voting for either candidate.
\end{itemize}
The \textbf{voting method} $f$ is a function $f\colon\{-1,1\}^{n}\to\{-1,1\}$.  If the votes are $(X_{1},\ldots,X_{n})$, then the winner of the election is $f(X_{1},\ldots,X_{n})$.
\end{assumption}

\begin{remark}
One could argue that the voter assumptions are not realistic, since e.g. a small group of friends will most likely share similar views, read similar news items, etc., so that their decisions are not truly independent.  On the other hand, modeling a large number of voters to be independent individuals is somewhat plausible, from an aggregate perspective.
\end{remark}

\begin{assumption}[\embolden{Voter Corruption Assumptions}]\label{corras}
Let $0\leq\rho\leq1$.  Suppose we are given the votes $X_{1},\ldots,X_{n}$ of $n$ voters choosing between $2$ candidates.  The corrupted votes $Y_{1},\ldots,Y_{n}$ are defined as follows.
\begin{itemize}
\item The \textbf{corrupted votes} $Y_{1},\ldots,Y_{n}$ are independent, identically distributed (i.i.d.) random variables.
\item For each $1\leq i\leq n$, if $X_{i}=x_{i}\in\{-1,1\}$, then with probability $1-\rho$, $Y_{i}$ is a uniformly random element of $\{-1,1\}$, and with probability $\rho$, $Y_{i}=x_{i}$.
\end{itemize} 
\end{assumption}
\begin{remark}
When $\rho=1$, $Y_{i}=X_{i}$ for all $1\leq i\leq n$, i.e. no vote corruption has occurred.  When $\rho$ is close to $1$, $Y_{1}$ is almost the same as $X_{1}$, i.e. $X_{1}$ and $Y_{1}$ are strongly correlated, and a small amount of vote corruption has occurred.

When $\rho=0$, the votes $(X_{1},\ldots,X_{n})$ and $(Y_{1},\ldots,Y_{n})$ are independent of each other, i.e. the corrupted votes $(Y_{1},\ldots,Y_{n})$ have been so scrambled that they have no dependence (or correlation) with the original votes $(X_{1},\ldots,X_{n})$.
\end{remark}

\noindent\textbf{Notation.}  We denote the original (random) votes cast in the election as $X=(X_{1},\ldots,X_{n})$, and we denote the corrupted votes as $Y=(Y_{1},\ldots,Y_{n})$.

Recall that the voting method $f$ takes the value $1$ or $-1$, according to which candidate ($1$ or $-1$) won the election.  So, if the winner of the election $f(X)$ is the same as the winner of the election with corrupted votes $f(Y)$, then
$$f(X)f(Y)=1.$$
On the other hand, if the winner of the election $f(X)$ is different than the winner of the election with corrupted votes $f(Y)$, then
$$f(X)f(Y)=-1.$$
So, the voting method that has the largest average value of
$$f(X)f(Y)$$
will be the most stable on average to random vote corruption.  This observation motivates the following definition.

\begin{definition}[\embolden{Noise Stability}]\label{nsdef}
Let $f\colon\{-1,1\}^{n}\to\{-1,1\}$ be a voting method.  The \textbf{noise stability} of $f$ with correlation parameter $0\leq\rho\leq1$ is
$$S_{\rho}(f)\colonequals \E f(X)f(Y).$$
Here $\E$ denotes expected value, or average value, with respect to the random variables $X=(X_{1},\ldots,X_{n}),Y=(Y_{1},\ldots,Y_{n})$ defined in Assumptions \ref{voteas} and \ref{corras}.
\end{definition}

\begin{remark}\label{nsprobrk}
The probability that the election's outcome stays the same after vote corruption has occurred is $\frac{1}{2}(1+S_{\rho}(f))$.
\end{remark}

\subsubsection{Unbiased Case}

Theorem \ref{misinf} can be restated as: the majority function maximizes noise stability, among a reasonable class of voting methods.

In the Theorem below, we denote the Majority function as $\mathrm{Maj}_{n}\colon\{-1,1\}^{n}\to\{-1,1\}$, so that
$$\mathrm{Maj}_{n}(x_{1},\ldots,x_{n})=\mathrm{sign}(x_{1}+\cdots+x_{n}),\qquad\mathrm{for}\,\,\mathrm{all}\,\,(x_{1},\ldots,x_{n})\in\{-1,1\}^{n}.$$

\pagebreak

\begin{wrapfigure}{r}{0.5\textwidth}
\vspace{-1cm}
\centering
    \,\,\includegraphics[width=.45\textwidth]{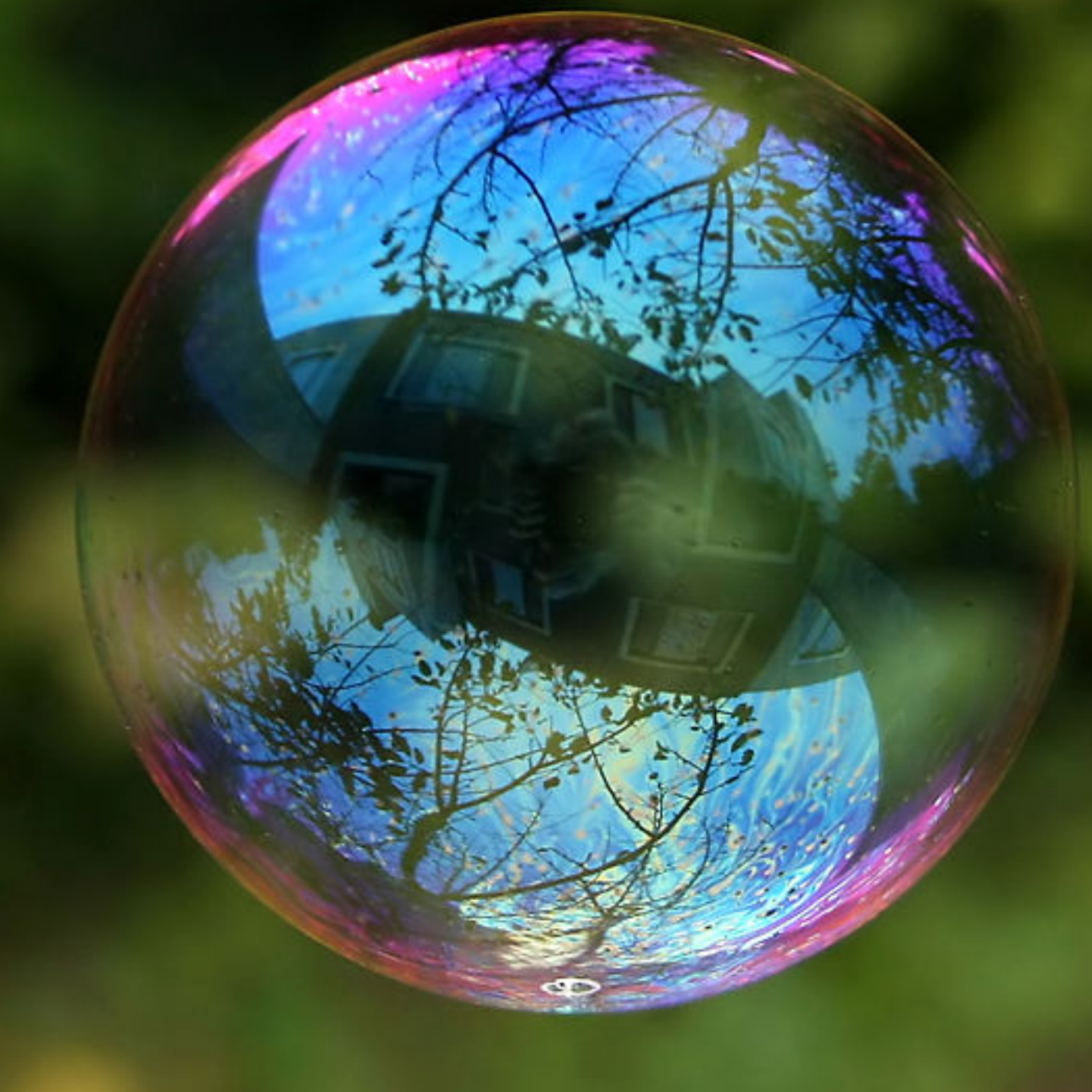} \\
    \caption[blah]{The proof of Theorem \ref{mis} is related to the fact that soap bubbles take a spherical shape.  A soap bubble\footnotemark\, encloses a fixed volume of air, and it minimizes its surface area. The majority function has an analogous optimality property.  We will discuss this connection more in Section \ref{otherapp}.}
      \vspace{-1cm}
\end{wrapfigure}

For simplicity, we first state the balanced case of the Theorem.  That is, we make the assumption that the random votes $X_{1},\ldots,X_{n}$ are each uniformly distributed in $\{-1,1\}$.  So, e.g. $X_{1}=1$ with $1/2$ probability, and $X_{1}=-1$ with $1/2$ probability.

\begin{theorem}[\embolden{Majority is Stablest, Formal Version}, {\cite[Conjecture 1.1]{mossel10}}]\label{mis}
Let $0\leq\rho\leq1$ and let $\epsilon>0$.  Then there exists $\tau>0$ such that, if $f\colon\{-1,1\}^{n}\to\{-1,1\}$ satisfies $\E f(X)=0$ and $\mathrm{Inf}_{i}(f)\leq\tau$ for all $1\leq i\leq n$, then
$$S_{\rho}(f)\leq\lim_{n\to\infty} S_{\rho}(\mathrm{Maj}_{n})+\epsilon=\frac{2}{\pi}\sin^{-1}(\rho)+\epsilon.\qquad\qquad\qquad\qquad\qquad\qquad\qquad\qquad\qquad\qquad$$
\end{theorem}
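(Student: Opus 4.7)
The plan is to reduce the discrete problem on the Boolean cube to a continuous problem in Gaussian space, where the extremal inequality is Borell's isoperimetric theorem, and then transfer the Gaussian bound back to $f$ via the invariance principle. The small-influence hypothesis is exactly what makes this transfer possible.

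First I would expand $f$ in the Fourier--Walsh basis, writing $f(x)=\sum_{S\subset\{1,\ldots,n\}}\hat f(S)\prod_{i\in S}x_{i}$. A short computation using independence of the noise gives the standard identity $S_{\rho}(f)=\sum_{S}\rho^{\vnormf{S}}\hat f(S)^{2}$, and the balance condition $\E f(X)=0$ says $\hat f(\emptyset)=0$. Since $f$ is $\pm 1$-valued, $\sum_{S}\hat f(S)^{2}=1$. One then truncates to degree $d$: writing $f=f^{\leq d}+f^{>d}$, the tail contribution to $S_{\rho}(f)$ is at most $\rho^{d}$, which can be made smaller than $\epsilon/3$ by choosing $d$ large depending only on $\rho$ and $\epsilon$. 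In the same step I would also smooth out the low-degree part slightly by applying a noise operator $T_{1-\eta}$, which preserves mean $0$, shrinks each influence, and changes the noise stability by at most an arbitrarily small amount. This leaves us studying a low-degree multilinear polynomial $g$ in independent Rademacher variables with small influences.

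Next I would invoke the \emph{invariance principle} of Mossel--O'Donnell--Oleszkiewicz: if $g$ is a multilinear polynomial of degree at most $d$ and every variable has influence at most $\tau$, then the distribution of $g(X_{1},\ldots,X_{n})$ with $X_{i}$ i.i.d.\ uniform $\pm 1$ is close (in cumulative distribution and hence after applying a smooth truncation $[-1,1]$-valued Lipschitz rounding) to the distribution of $g(G_{1},\ldots,G_{n})$ with $G_{i}$ i.i.d.\ standard Gaussians; the error is a polynomial in $\tau$ depending on $d$. Applying this to both $g(X)$ and $g(Y)$ jointly (with the Gaussian analogue being a $\rho$-correlated Gaussian pair $(G,H)$), I would conclude that $S_{\rho}(g)$ differs from the Gaussian noise stability $\E[\tilde g(G)\tilde g(H)]$ by at most $\epsilon/3$, where $\tilde g$ is the natural $[-1,1]$-valued Gaussian counterpart, provided $\tau$ is chosen small enough in terms of $d,\rho,\epsilon$. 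The technical heart here is the hypercontractive (Bonami--Beckner) inequality used to control the Lindeberg-style replacement term by term.

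Finally I would apply Borell's theorem in Gaussian space: among all measurable $F\colon\R^{n}\to[-1,1]$ with $\E F(G)=0$, the maximum of $\E[F(G)F(H)]$ over $\rho$-correlated Gaussians is attained by a centered half-space indicator (with values in $\{-1,1\}$ renormalized), and the maximum equals $\frac{2}{\pi}\sin^{-1}(\rho)$. Combining the three $\epsilon/3$ errors yields $S_{\rho}(f)\leq\frac{2}{\pi}\sin^{-1}(\rho)+\epsilon$, and the matching limit $\lim_{n\to\infty}S_{\rho}(\mathrm{Maj}_{n})=\frac{2}{\pi}\sin^{-1}(\rho)$ follows from a direct central limit computation (two sums of Rademachers with overlap $\rho$ converge to a $\rho$-correlated Gaussian pair, and then one computes $\E[\mathrm{sign}(G)\mathrm{sign}(H)]$ via the Sheppard formula).

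The main obstacle is the invariance principle itself: proving the quantitative replacement statement for arbitrary low-degree multilinear polynomials with small influences requires hypercontractivity plus a careful Lindeberg swap, and one must be attentive to the fact that the natural multilinear extension $\tilde g$ need not take values in $[-1,1]$, so a smooth rounding step must be inserted without destroying the approximation. Borell's theorem itself is nontrivial but can be cited as a black box; the interface between discrete Fourier analysis and Gaussian geometry is where all the real work lies.
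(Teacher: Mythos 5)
The paper does not actually prove Theorem \ref{mis}; being a survey, it quotes the result from Mossel--O'Donnell--Oleszkiewicz, so there is no internal proof to compare against. That said, your outline is an accurate reconstruction of the argument in that reference: the identity $S_{\rho}(f)=\sum_{S}\rho^{\absf{S}}\hat f(S)^{2}$, degree truncation plus mild smoothing by $T_{1-\eta}$, the invariance principle (proved by a Lindeberg swap controlled via hypercontractivity) to pass to a $\rho$-correlated Gaussian pair, Borell's isoperimetric theorem giving the extremal value $\frac{2}{\pi}\sin^{-1}(\rho)$ for mean-zero $[-1,1]$-valued functions, and Sheppard's formula plus the central limit theorem for $\lim_{n\to\infty}S_{\rho}(\mathrm{Maj}_{n})$. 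You also correctly identify the two delicate points: the multilinear extension need not be $[-1,1]$-valued, so a Lipschitz rounding must be interleaved with the invariance estimate, and the resulting $\tau$ must depend only on $\rho$ and $\epsilon$ (not on $n$), which your choice of $d=d(\rho,\epsilon)$ and then $\tau=\tau(d,\rho,\epsilon)$ delivers. As a proof plan this is sound; turning it into a complete proof of course requires establishing the invariance principle itself, which is the substantial technical content of the original paper and is reasonably treated as the cited black box here, alongside Borell's theorem.
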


The assumption $\E f(X)$ says that $f$ is balanced according to Definition \ref{baldef}, and the assumption $\max_{1\leq i\leq n}\mathrm{Inf}_{i}(f)\leq\tau$ corresponds to part (ii) of Theorem \ref{misinf}.

\footnotetext{Picture taken from https://commons.wikimedia.org/wiki/File:Reflection\underline{ }in\underline{ }a\underline{ }soap\underline{ }bubble\underline{ }edit.jpg}

\subsubsection{Biased Case}

The assumption in Theorem \ref{mis} that the votes are uniformly distributed in $\{-1,1\}$ can be relaxed, as we now describe.  Let $0<p<1$.  Let $X_{1},\ldots,X_{n}$ be independent identically distributed random variables where $\P(X_{i}=1)=1-\P(X_{i}=-1)=p$ for all $1\leq i\leq n$.

\begin{theorem}[\embolden{Majority is Stablest, Formal, Biased Case}, {\cite[Theorem 4.4]{mossel10}}]\label{misg}
Let $0\leq\rho\leq1$.  Let $-1\leq\mu\leq 1$.  Let $t=t_{n}\in\R$ such that $\abs{\E \mathrm{Maj}_{n,t}(X)-\mu}=\min_{t'\in\R}\abs{\E\mathrm{Maj}_{n,t'}-\mu}$.  Let $\tau>0$ and let $f\colon\{-1,1\}^{n}\to\{-1,1\}$ satisfy $\E f(X)=\mu$ and $\mathrm{Inf}_{i}(f)\leq\tau$ for all $1\leq i\leq n$, then
$$S_{\rho}(f)\leq\lim_{n\to\infty} S_{\rho}(\mathrm{Maj}_{n,t_{n}})+O_{p,1-\rho}\Big(\frac{\log\log(1/\tau)}{\log(1/\tau)}\Big).$$
\end{theorem}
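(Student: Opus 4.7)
The proof adapts the Mossel--O'Donnell--Oleszkiewicz approach used for the unbiased Theorem \ref{mis} to the $p$-biased product measure. The three main ingredients are: $p$-biased Fourier analysis with a smoothing step, the Invariance Principle transferring noise stability from the biased hypercube to Gaussian space, and Borell's Gaussian noise-stability inequality characterizing half-spaces as extremizers.

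First I would set up the $p$-biased Fourier basis $\chi(x)=(x-(2p-1))/\sqrt{4p(1-p)}$, so that $\{1,\chi\}$ is an orthonormal basis for $L^{2}(\{-1,1\},p\delta_{1}+(1-p)\delta_{-1})$. Expanding $f$ as a multilinear polynomial in the $\chi(X_{i})$'s gives $S_{\rho}(f)=\sum_{S}\rho^{\abs{S}}\hat{f}(S)^{2}$, while the $p$-biased influence of coordinate $i$ equals $\sum_{S\ni i}\hat{f}(S)^{2}$. Next I would apply a mild noise-smoothing step: replace $f$ by $T_{1-\gamma}f$ for small $\gamma>0$, which dampens high frequencies while only perturbing $S_{\rho}(f)$ by $O(\gamma)$ and making the effective degree $d=O(\gamma^{-1}\log(1/\gamma))$. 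Finally I would approximate $\mathrm{sign}$ by a $C^{3}$ Lipschitz test function $\psi_{\eta}$ with $\vnorm{\psi_{\eta}-\mathrm{sign}}_{\infty}\leq\eta$ and $\vnorm{\psi_{\eta}}_{C^{3}}=O(\eta^{-3})$, incurring an additional $O(\eta)$ error in the quantity to be bounded.

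Next, the Invariance Principle replaces the biased Bernoulli pair $(\chi(X_{i}),\chi(Y_{i}))$ by a $\rho$-correlated pair of standard Gaussians $(G_{i},G'_{i})$, with the quantitative error
$$\abs{\E[\psi_{\eta}(Q(X))\psi_{\eta}(Q(Y))]-\E[\psi_{\eta}(Q(G))\psi_{\eta}(Q(G'))]}\leq C_{p}\,\tau^{\Omega(1/d)}\vnorm{\psi_{\eta}}_{C^{3}}^{2},$$
where $Q=T_{1-\gamma}f$ and the $p$-dependence enters through $\vnorm{\chi}_{4}=O((p(1-p))^{-1/2})$. Borell's inequality then shows that among measurable $F\colon\R^{n}\to[-1,1]$ with $\E F(G)=\mu$, the quantity $\E[F(G)F(G')]$ is maximized when $F$ is (up to sign) the indicator of the half-space $\{x\in\R^n:x_{1}\geq\Phi^{-1}((1-\mu)/2)\}$, yielding an explicit bivariate Gaussian orthant probability. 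By the central limit theorem applied to the normalized sum $(X_{1}+\cdots+X_{n}-n(2p-1))/\sqrt{4np(1-p)}$ and its $\rho$-correlated counterpart, this same orthant probability equals $\lim_{n\to\infty}S_{\rho}(\mathrm{Maj}_{n,t_{n}})$, so the Gaussian upper bound is asymptotically matched by the majority function, confirming tightness.

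The main obstacle is the joint optimization of three error sources: $O(\gamma)$ from noise smoothing, $O(\eta)$ from the sign approximation, and $O_{p}(\eta^{-6}\tau^{c\gamma/\log(1/\gamma)})$ from the invariance principle. Choosing $\gamma\sim 1/\log(1/\tau)$ and $\eta$ polynomial in $1/\log(1/\tau)$ balances these terms and yields the claimed $\log\log(1/\tau)/\log(1/\tau)$ rate. The dependence on $1-\rho$ enters through the hypercontractive $(2,q)$-estimates used to control $T_{1-\gamma}f$ inside the invariance principle, and the dependence on $p$ enters through the explosion of $\vnorm{\chi}_{4}$ as $p\to 0$ or $p\to 1$. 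Architecturally the biased case is identical to the unbiased one; these $p$- and $(1-\rho)$-dependent moment and hypercontractive bounds are the chief new technicalities, together with the verification that $\mathrm{Maj}_{n,t_{n}}$ achieves the Gaussian bound in the $n\to\infty$ limit for the biased input distribution.
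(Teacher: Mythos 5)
The paper itself does not prove Theorem \ref{misg}; it is stated as a quotation of \cite[Theorem 4.4]{mossel10} with no argument given. Your outline correctly reproduces the strategy of that cited proof---$p$-biased orthonormal Fourier expansion, smoothing by $T_{1-\gamma}$, the invariance principle transferring to correlated Gaussians, Borell's inequality identifying half-spaces as extremizers, and a CLT step matching $\mathrm{Maj}_{n,t_{n}}$ to the Gaussian bound, with the three error sources balanced to give the $\log\log(1/\tau)/\log(1/\tau)$ rate---so it is essentially the same approach as the source the paper relies on.
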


For an even more general version of Theorem \ref{misg}, see \cite[Theorem 4.4]{mossel10}

\subsection{More than Two Candidates}

In this section, we consider elections between $k\geq3$ candidates, where each of $n$ voters casts a single vote for a single candidate.

Theorem \ref{misg} (and its generalizations such as \cite[Theorem 4.4]{mossel10}) essentially completely characterize majority functions as the most stable to independently random corruption of votes, \textit{when the election has only two candidates}.  Unfortunately, analogous statements for three or more candidates seem harder to prove.  With more than two candidates, a suitable replacement for the majority is the plurality function.  In a plurality election, the candidate with the most votes wins the election.

\begin{wrapfigure}{r}{0.52\textwidth}

\vspace{-.5cm}
\centering
    \includegraphics[width=.26\textwidth]{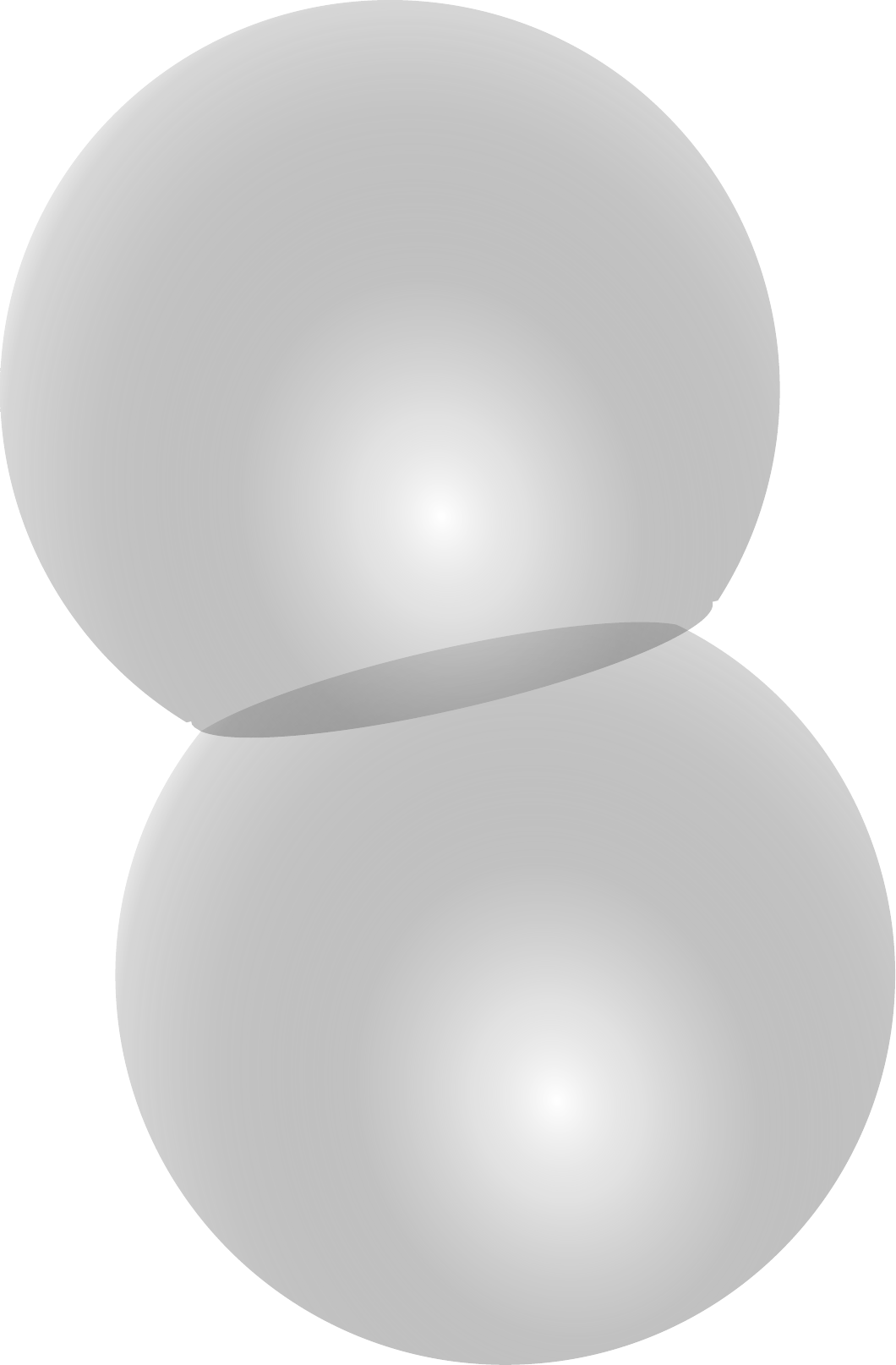} \\

    \caption{Generalizing Theorem \ref{misg} to elections with three candidates is related to: proving that two joint soap bubbles take the pictured ``double-bubble'' shape.  Two soap bubbles enclose two separate and fixed volumes of air, and they minimize their total surface area \cite{hutchings02}. The plurality function should have an analogous optimality property.  We will discuss this connection more in Section \ref{otherapp}.}
    \label{doublepic}
%
%
    \includegraphics[width=.3\textwidth]{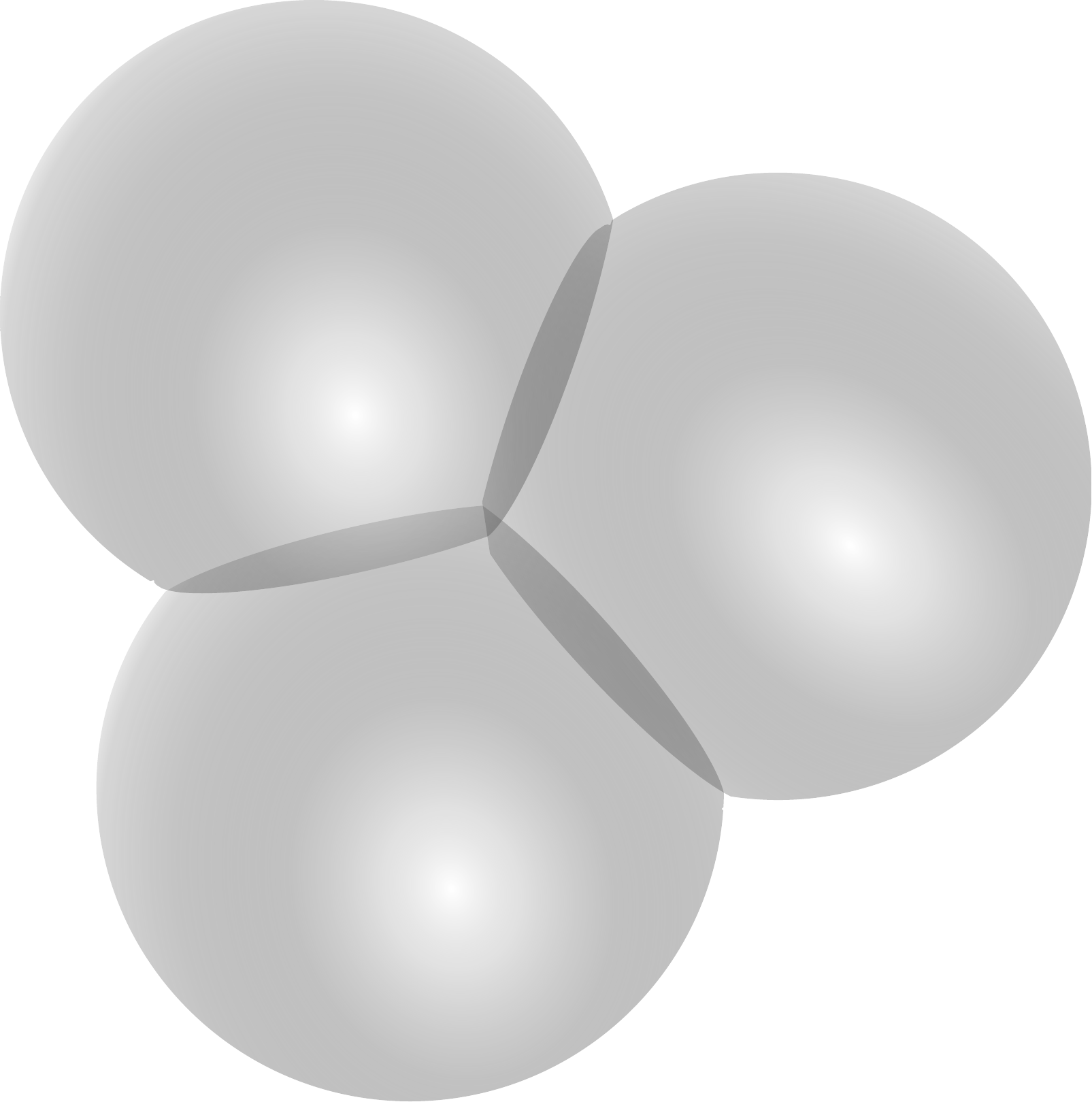} \\

    \caption{Three soap bubbles that have collided take the shape shown here.}\label{tripic}
        \vspace{-2.2cm}
\end{wrapfigure}

It was conjectured \cite{khot07,isaksson11} that the plurality function is the balanced voting method that is most stable to independent, random vote corruption

\begin{conj}[\embolden{Plurality is Stablest, Informal Version}, {\cite{khot07}, \cite[Conjecture 1.9]{isaksson11}}]\label{pisinf}
Suppose we run an election with a large number $n$ of voters and $k\geq3$ candidates.  In this election, voters are modelled to have the following random behavior:
\begin{itemize}
\item Voters cast their votes randomly,\\ independently, with equal probability of voting for each candidate.
\item Each voter has a small influence on\\ the outcome of the election.
\end{itemize}
Then the plurality function is the balanced\\ voting method that best preserves the out-\\ come of the election, when votes\\ have been corrupted independently.
\end{conj}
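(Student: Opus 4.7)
The plan is to carry over the Mossel--O'Donnell--Oleszkiewicz strategy that yielded Theorem \ref{mis} to the $k$-candidate setting, by (i) encoding a plurality-type voting method as a vector-valued function $f=(f_{1},\ldots,f_{k})\colon\{-1,1\}^{n}\to\Delta_{k}$, where $\Delta_{k}\subset\R^{k}$ is the probability simplex and $f_{j}(x)$ indicates that candidate $j$ wins on the vote profile $x$; (ii) using an invariance principle to transfer the optimization over low-influence $f$ to an isoperimetric-type problem for partitions of $\R^{n}$ under the standard Gaussian measure; and (iii) solving that Gaussian problem, which is the main obstacle and is known as the Standard Simplex (or \emph{Plurality is Stablest}) Conjecture.

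First, I would define the multi-candidate noise stability
$$S_{\rho}(f)\colonequals\sum_{j=1}^{k}\E\, f_{j}(X)f_{j}(Y),$$
which reduces to Definition \ref{nsdef} for $k=2$ and represents the probability that the winner is unchanged under $\rho$-correlated vote corruption. The balancedness assumption becomes $\E f_{j}(X)=1/k$ for all $j$, and the low-influence assumption is analogous to that in Theorem \ref{mis}, now imposed on each coordinate $f_{j}$. Next, I would invoke a vector-valued version of the invariance principle of Mossel--O'Donnell--Oleszkiewicz. After a Hermite/multilinear expansion of $f$ truncated to low degree, the principle replaces the independent sign inputs $(X,Y)$ by $\rho$-correlated Gaussian inputs $(G,G')$ on $\R^{n}$ at $o_{\tau}(1)$ cost in $S_{\rho}$, provided all influences are below a threshold $\tau=\tau(\epsilon,k)$. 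After a rounding step that maps real-valued Gaussian functions back into $\Delta_{k}$, the problem becomes: among measurable partitions $\Omega_{1},\ldots,\Omega_{k}$ of $\R^{n}$ with $\gamma(\Omega_{j})=1/k$, maximize
$$\sum_{j=1}^{k}\int_{\Omega_{j}} T_{\rho}\mathbf{1}_{\Omega_{j}}\, d\gamma,$$
where $\gamma$ is the standard Gaussian measure and $T_{\rho}$ is the Ornstein--Uhlenbeck semigroup.

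The main obstacle is this Gaussian partition problem. The conjectured optimizer is the \emph{standard simplex partition}: embed $\R^{k-1}\hookrightarrow\R^{n}$, choose unit vectors $v_{1},\ldots,v_{k}\in\R^{k-1}$ pointing to the vertices of a regular simplex, and set $\Omega_{j}\colonequals\{x\in\R^{n}\colon\langle x,v_{j}\rangle\geq\langle x,v_{i}\rangle\,\,\text{for all } i\}$. Unlike the $k=2$ case, where Borell's inequality and spherical rearrangement produce the half-space optimizer directly, for $k\geq 3$ one must move several sets simultaneously, and no known symmetrization technique seems to suffice. A plausible attack is via first and second variations: any smooth critical partition must have interfaces that are pieces of hyperplanes meeting in $120^{\circ}$ triple junctions (the Gaussian analog of the Plateau conditions visible in Figures \ref{doublepic} and \ref{tripic}), and one then tries to rule out all non-simplex critical configurations by a convexity or calibration argument. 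For $k=3$, a Gaussian analogue of the Hutchings--Morgan--Ritor\'e--Ros double bubble theorem would, via the reduction above, give Conjecture \ref{pisinf} in this case; for general $k$ the problem appears genuinely open, and even for $k=3$ at arbitrary $\rho$ the Gaussian inequality is not known.

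Finally, assuming the Gaussian step is resolved, I would complete the proof of Conjecture \ref{pisinf} by unwinding the invariance-principle reduction to obtain $S_{\rho}(f)\leq S_{\rho}(\mathrm{Plur}_{n,k})+\epsilon$, with $\tau$ depending on $\epsilon,k,1-\rho$. Two technical points need care: the rounding step from real-valued Gaussian functions into the simplex loses a controllable but nonzero amount of noise stability, and the quantitative rate in the invariance principle degrades as $\rho\to 1$, so the dependence of $\tau$ on these parameters should be tracked throughout, in the style of Theorem \ref{misg}.
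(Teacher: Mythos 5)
The statement you are trying to prove is a \emph{conjecture}, and the paper offers no proof of it: it states explicitly that Conjecture \ref{pisinf} is open for all parameters $0<\rho<1$, with only the case of $\rho$ close to $1$ established in \cite{heilman18b}. Your proposal is an accurate description of the standard line of attack --- encode the winner as a vector-valued function into the simplex, define the $k$-candidate noise stability as $\sum_{j}\E f_{j}(X)f_{j}(Y)$, apply a vector-valued invariance principle to pass to $\rho$-correlated Gaussians, and reduce to the problem of maximizing Gaussian noise stability over measure-$1/k$ partitions of $\R^{n}$, with the standard simplex partition as the conjectured optimizer --- and steps (i) and (ii) of this program are indeed known (this is essentially the Isaksson--Mossel reduction in \cite{isaksson11}). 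But step (iii), the Gaussian standard-simplex problem, is precisely the open content of the conjecture, and you correctly acknowledge that you cannot supply it. So the proposal is a research program with a named, essential gap rather than a proof; it could not be otherwise, since the statement itself is unproven.

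Two points from the paper are worth adding to your account. First, the paper records a concrete obstruction from \cite{heilman14}: the plurality-is-stablest statement \emph{fails} when the candidates have unequal chances of winning, unlike the two-candidate Majority is Stablest Theorem \ref{misg}, which does have a biased version. This is why the paper says the proof methods for Theorem \ref{misg} (Borell-type rearrangement in the Gaussian setting) cannot carry over, and why it highlights calculus-of-variations methods --- first and second variation analysis of critical partitions, as you suggest --- as the promising route. Second, the only proven case, $\rho$ near $1$, comes from the resolution of the Gaussian multi-bubble (minimal surface area) problem in \cite{milman18b,heilman18b}; this matches your observation that the surface-area problem is the $\rho\to1$ limit of the noise-stability problem, but note that even granting the full Gaussian multi-bubble theorem, the noise-stability inequality at a fixed $\rho$ bounded away from $1$ does not follow and remains open even for $k=3$.
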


In the case that the probability of vote corruption is small ($\rho$ is close to $1$), we proved the first known case of Conjecture \ref{pisinf} in \cite{heilman18b}, culminating a series of previous works. Conjecture \ref{pisinf} for all parameters $0<\rho<1$ is still open.  Unlike the case of the Majority is Stablest (Theorem \ref{misg}), Conjecture \ref{pisinf} \textit{cannot hold} when the candidates have unequal chances of winning the election \cite{heilman14}.  This realization is an obstruction to proving Conjecture \ref{pisinf}.  It suggested that proof methods for Theorem \ref{misg} cannot apply to Conjecture \ref{pisinf}.  Indeed, calculus of variations methods have emerged as a promising avenue for proving Conjecture \ref{pisinf}, when the candidates have equal chances of winning the election.

%

 \subsection{Additional Comments}

Discrete Fourier analysis often plays a prominent role in noise stability and voting.  The surveys \cite{odonnell14b,khot10b} and book \cite{odonnell14} describe the interconnectedness of these topics.

\pagebreak

\begin{wrapfigure}{r}{0.5\textwidth}
\centering
    \includegraphics[width=.3\textwidth]{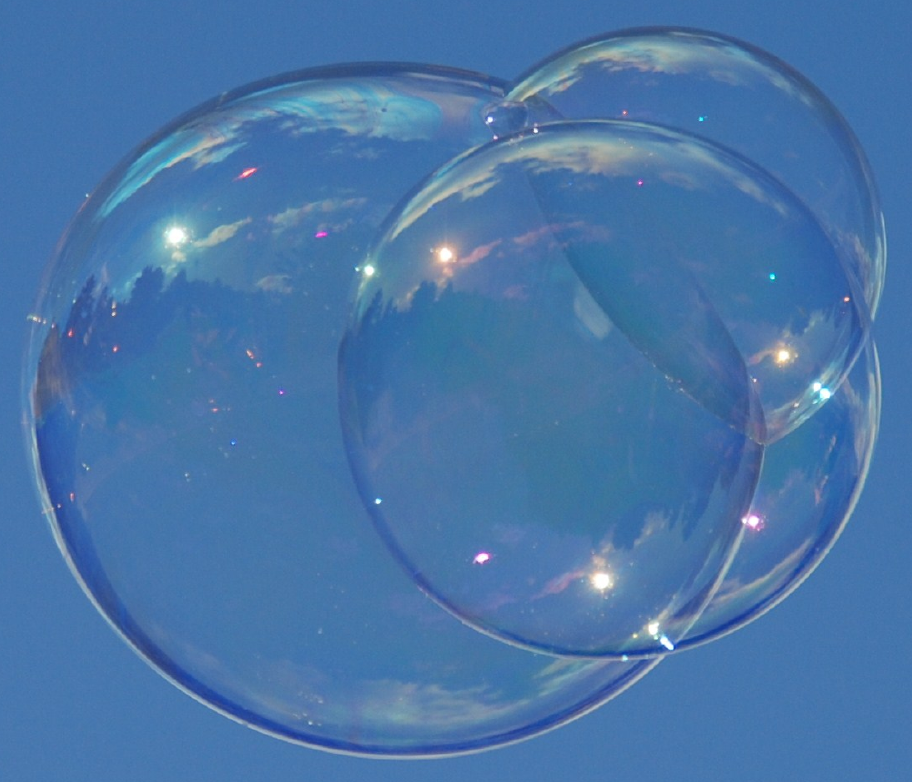} \\
    \caption[blah]{Three soap bubbles that have collided take the shape shown here\footnotemark.}
    \vspace{-1cm}
\end{wrapfigure}

We have not focussed much on ranked choice voting methods.  For more on this topic, see e.g. \cite{mossel13} or the comprehensive works \cite{arrow02,brandt16}.

\begin{question}\label{rankedq}
Is it possible to state a sensible version of the Plurality is Stablest Conjecture \ref{pisinf} for ranked choice voting methods?
\end{question}

\begin{wrapfigure}{r}{0.5\textwidth}
\vspace{-.5cm}
\centering
    \includegraphics[width=.25\textwidth]{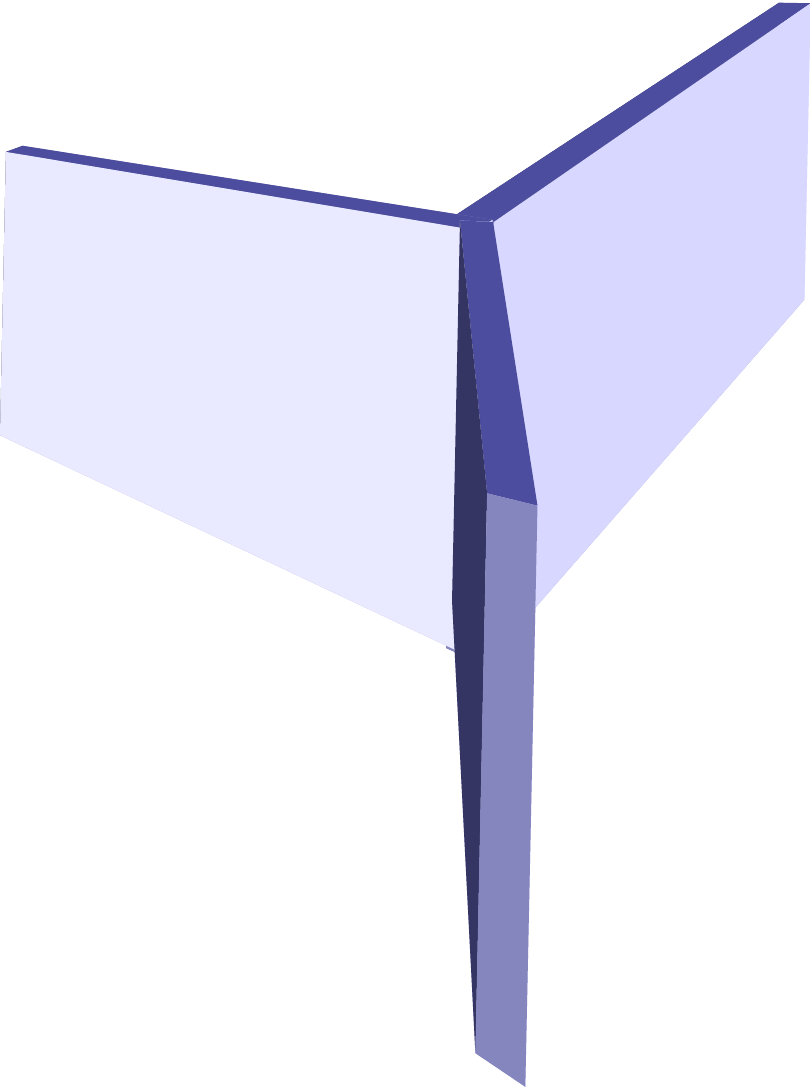} \\
    \caption{The partition of Euclidean space with three regions of fixed Gaussian volume and minimal total Gaussian surface area takes this shape \cite{milman18b,heilman18b}.}
     \vspace{-1.3cm}
\end{wrapfigure}

In ranked choice voting, each voter provides a ranked list of the candidates.  Suppose a voting method is then a function only of the pairwise comparisons of each candidate, as in Table \ref{table2}.  Suppose then that each of these pairwise comparisons is independently corrupted.  Then one possible answer to Question \ref{rankedq} says that the Plurality of the pairwise comparisons is most stable to this kind of vote corruption.  Taking the Plurality of pairwise comparisons is known as the Second Order Copeland voting method.  So, one could argue that this method is most stable to vote corruption.  However, under other models of vote corruption, it is not clear what the ``best'' ranked choice voting method should be.

\footnotetext{Picture taken from https://www.flickr.com/photos/sm/2603411754/sizes/o/}

\section{Brief Discussion of US Electoral College}

The US Electoral College system is similar but not identical to the two-tier majority function described in Example \ref{itmajex} with $m=51$ equal-sized ``states.''  Suppose we run an election between two candidates, where $g$ is a two-tier majority function with $m=51$, and $f$ is the usual majority function with $n$ a large odd number of voters.  We already know from Theorem \ref{misg} that the majority function is more stable to vote corruption that the electoral college system.  But how much more stable is it?

We consider the noise stability $S_{\rho}$ of each of these voting methods where $\rho=1-2\epsilon$ and $\epsilon>0$ is small with $51<1/\epsilon<n$.  That is, the probability of each vote being corrupted is small.  Then \cite[p. 9]{odonnell08}
$$S_{1-2\epsilon}(f)\approx 1-\frac{4}{\pi}\sqrt{\epsilon},\qquad S_{1-2\epsilon}(g)\approx 1-2(\frac{2}{\pi})^{3/2}\sqrt{51}\sqrt{\epsilon}.$$
So, by Remark \ref{nsprobrk}, the probability that vote corruption changes the election's outcome is about 5.7 times greater for the electoral college, than for majority.  Here we used
$$\frac{2(2/\pi)^{3/2}\sqrt{51}}{4/\pi}\approx 5.698035\ldots$$

Computer simulation similarly shows that, when the size of each ``state'' defining the two-tier majority function agrees with their size from the 2010 census, then the probability that vote corruption changes the election's outcome is more than 4 times greater for the electoral college, than for majority.

Strictly speaking, the US Electoral College system is not a two-tier majority function.  Each state (except for Maine and Nebraska) runs its own plurality vote, so that the candidate winning the most votes in that state wins that particular state.  Then each state's winner is entered into a nationwide weighted majority vote.  If no candidate wins this weighted majority vote, then the House of Representatives chooses the president, with one vote for each of the 50 states.  In an election between two candidates, a perfect tie in the electoral college is unlikely, i.e. it is unlikely for the House of Representatives to choose the president.  Moreover, since Maine and Nebraska are small states, their chance of changing the outcome in the electoral college is small.  So, the probability that vote corruption changes the U.S. presidential election's outcome is still more than 4 times greater for the electoral college, than for majority.

The integer weight of each state in the nationwide majority vote is equal to the number of national congressional representatives in each state (with a weight of three given to Washington D.C.)  Consequently, each state has a minimum weight of $3$ in the nationwide majority vote (i.e. the electoral college vote).  The apportionment of members to the House of Representatives is a nontrivial task, since the ratios of state populations should somehow closely match the ratios of their numbers of electoral votes.  Apportionment methods were hotly debated over the nation's history; for more on this history see e.g. \cite{balinski75}.

As noted by Banzhaf in 1968, the probability of one single voter changing the election's outcome, if all other voters cast their votes randomly, tends to be higher for voters in larger states.  However, the assumption that all other voters cast their votes uniformly at random is unrealistic.  Despite our similarly unrealistic assumptions of voter behavior, i.e. Assumption \ref{voteas}, actual data for presidential elections in the U.S. (in Table \ref{table5}) demonstrates that it is much more likely for a small number of vote changes to change the electoral college's outcome than a plurality vote.

\begin{table}[htbp!]\label{table5}
\small
\centering
\begin{tabular}{crrlcc|}
  &  \multicolumn{5}{c}{U.S. Presidential Election Vote Margins}\\
 \cline{2-6}
  Elec-& \multicolumn{1}{|c}{Popular Vote } & Vote Changes  &Percent & Electoral  & State where votes would \\
   tion& \multicolumn{1}{|c}{Margin, } & Sufficient to  & of Popular &Vote & be changed (corresponding\\
   Year& \multicolumn{1}{|c}{Rounded} & Sway Election & Vote &Margin & electoral votes) \\
 \cline{1-6}
  \multicolumn{1}{|c|}{1844} & 40,000 &2,554 & .09\% &65 & New York (36)\\  
 \multicolumn{1}{|c|}{1848} & 140,000 & 6,669 & .23\% &36 & Pennsylvania (26)\\ 
\multicolumn{1}{|c|}{1856} & 500,000  & 11,155 &.28\% & 60 & Illinois, Tennessee, Kentucky (35) \\ 
 \multicolumn{1}{|c|}{\textcolor{red}{1876}} & \textcolor{red}{-250,000}  & 445 & .005\%  & 1 & South Carolina (7)\\   
 \multicolumn{1}{|c|}{\textcolor{blue}{1880}} & \textcolor{blue}{2,000}$^{*}$  & 8,416 & .09\% & 59 & OR, CT, CO, NH, IN (32)\\ 
 \multicolumn{1}{|c|}{1884} & 60,000 & 575  & .006\% & 37 & New York (36)\\ 
 \multicolumn{1}{|c|}{\textcolor{red}{1888}} & \textcolor{red}{-90,000}   & 7,187 & .06\% & 65 & NY (36)\\ 
 \multicolumn{1}{|c|}{1892} & 400,000   & 25,362 & .21\%  & 132 & CA, IN, ND, KA, WI, WV, IL (68)\\ 
 \multicolumn{1}{|c|}{1896} & 600,000   & 18,602 & .13\%  & 95 & KY, CA, OR, IN,  WV, DE (50)\\ 
 \multicolumn{1}{|c|}{1916} & 600,000   & 1,887  &  .01\% & 23 & California (13)\\ 
 \multicolumn{1}{|c|}{1948} & 2,200,000  & 29,294 & .06\% & 114 & OH, CA, IL (78)\\ 
 \multicolumn{1}{|c|}{1960} & 110,000   & 14,265  & .02\% & 84 & HI, IL, MO, SC (59)\\ 
 \multicolumn{1}{|c|}{1968} & 500,000   & 41,971 & .06\% & 110 & Missouri, New Jersey, Alaska (32)$^{**}$\\ 
 \multicolumn{1}{|c|}{1976} & 1,700,000  & 12,791 & .02\%  & 57 & Ohio and Mississippi (32)\\ 
 \multicolumn{1}{|c|}{\textcolor{red}{2000}} & \textcolor{red}{-500,000}   & 269 & .0003\% & 5 & Florida (25)\\ 
 \multicolumn{1}{|c|}{2004} & 3,000,000  & 59,301 & .05\% & 35 & Ohio (20)\\ 
 \multicolumn{1}{|c|}{2008} & 10,000,000  & 495,310 & .38\% & 192 & NC, IN, FL, OH, VA, IA, NH (97)\\ 
 \multicolumn{1}{|c|}{2012} & 5,000,000  & 214,764 & .17\% & 126 & FL, OH, VA, NH (64)\\ 
 \multicolumn{1}{|c|}{\textcolor{red}{2016}} & \textcolor{red}{-3,000,000}  & 38,875 & .03\%  & 77 & MI, PA, WI (46)\\ 
\cline{1-6}
\end{tabular}
\caption{\tiny In 17 of the country's 58 elections between 1788 and 2016, the popular vote was so narrow that changing a relatively small number of votes in just a few states would have shifted the result of the national election. In some years, \textcolor{red}{the person elected president lost the popular vote}. In one year, 1880, \textcolor{blue}{the Electoral College vote was just about as close as the popular vote}.  $^{*}$Historians disagree about the popular vote margin in the 1880 election.
$^{**}$In 1968, the House of Representatives was controlled by a different party than won the presidential election, so changing the election's outcome would have only required the winner to fail to receive a majority in the Electoral College.}
\vspace{-1cm}
\end{table}

%
%
%

\section{Other Applications}\label{otherapp}

As mentioned above, Majority is Stablest and Plurality is Stablest are closely related to geometric optimization problems involving soap bubbles.  For a general introduction to minimal surfaces, see the surveys \cite{colding19,colding11} or the book by the same authors.  For more discussion on the connections between voting and geometry, see the surveys \cite{odonnell14b,khot10b}.

 In 2002, it was proven that the two regions of fixed volume that minimize their total surface area are those pictured in Figure \ref{doublepic} \cite{hutchings02}.  The analogous result for three regions, as in Figure \ref{tripic}, is still open.  This problem is only solved in the plane by Wichiramala.  Surprisingly, the Gaussian versions of these problem were recently resolved in \cite{milman18b}, and then strengthened in \cite{heilman18b}.

 The initial motivation for the Majority is Stablest Theorem \ref{misg} and the Plurality is Stablest Conjecture \ref{pisinf} came from theoretical computer science.  These inequalities imply sharp computational hardness for MAX-CUT and its generalizations.  That is, we can efficiently, approximately solve some computational problem, and improving on this approximation is impossible to do efficiently, assuming the Unique Games Conjecture, a standard complexity theoretic assumption.  For more on the relation between voting and computer science applications, see \cite{khot10b,khot07,isaksson11}.

The noise stability of functions, as used in the Majority is Stablest Theorem \ref{misg}, has developed into a subject of its own.  Various references exist on the subject, such as \cite{diakon10}.

Besides the applications of voting mentioned above, voting is also used as a subroutine in various machine learning algorithms, such as ``boosting'' algorithms of Freund and Schapire.  In a ``boosting'' algorithm, one has access to several ``weak'' learning algorithms (or ``weak'' experts) who can each correctly classify e.g. an email as spam or not spam, with 51\% probability.  (The experts are called ``weak'' since it is easy to correctly classify an email as spam or not spam with 50\% probability, just by randomly choosing either spam or not spam, with equal probability.)  Using an appropriately chosen weighted majority vote among all of the classifications of these experts, their aggregate classification of the email can be correct with close to 100\% probability.  So-called ``boosting'' algorithms combine ``weak'' expert opinions to ``boost'' the probability of correct classification in this way.

\medskip
\noindent\textbf{Acknowledgement}.  Thanks to Daniel Kane and Elchanan Mossel for helpful discussions.

\bibliographystyle{amsalpha}
\bibliography{12162011}

\end{document}